\documentclass [twoside,reqno,12pt] {amsart}


\usepackage{amsfonts}
\usepackage{amssymb}
\usepackage{a4}
\usepackage [all] {xy}

\usepackage{hyperref}

\newtheorem{thm}{Theorem}[section]
\newtheorem{cor}[thm]{Corollary}
\newtheorem{lem}[thm]{Lemma}
\newtheorem{prop}[thm]{Proposition}

\newtheorem{rem}[thm]{Remark}




\numberwithin{equation}{section}


\newcommand{\N}{\mathbb{N}}

\newcommand{\R}{\mathbb{R}}
\newcommand{\Q}{\mathbb{Q}}

\newcommand{\supp}{\operatorname{supp}}

\newcommand{\Z}{\mathbb{Z}}

\parindent0pt
\parskip6pt

\def\hat{\widehat}
\def\tilde{\widetilde}
\def \bfo {\begin {eqnarray*} }
\def \efo {\end {eqnarray*} }
\def \ba {\begin {eqnarray*} }
\def \ea {\end {eqnarray*} }
\def \beq {\begin {eqnarray}}
\def \eeq {\end {eqnarray}}
\def \supp {\hbox{supp }}

\def \p {\partial}

\def\hat{\widehat}
\def\tilde{\widetilde}
\def \bfo {\begin {eqnarray*} }
\def \efo {\end {eqnarray*} }
\def \ba {\begin {eqnarray*} }
\def \ea {\end {eqnarray*} }
\def \beq {\begin {eqnarray}}
\def \eeq {\end {eqnarray}}
\def \supp {\hbox{supp}}

\def \p {\partial}


\begin{document}

 \title[An inverse problem for a hyperbolic system on a vector bundle ]{An inverse problem for a hyperbolic system on a vector bundle and
energy measurements}

\author[Krupchyk]{Katsiaryna Krupchyk}

\address
        {K. Krupchyk, Department of Mathematics and Statistics \\
         University of Helsinki\\
         P.O. Box 68 \\
         FI-00014   Helsinki\\
         Finland}

\email{katya.krupchyk@helsinki.fi}

\author[Lassas]{Matti Lassas}

\address
        {M. Lassas, Department of Mathematics and Statistics \\
         University of Helsinki\\
         P.O. Box 68 \\
         FI-00014   Helsinki\\
         Finland}

\email{matti.lassas@helsinki.fi}

\date{}

\subjclass[2000]{58J45, 35J25}

\keywords{Vector bundles, inverse problems, hyperbolic systems, energy measurements}

\maketitle

\begin{abstract} 
A uniqueness result in the inverse problem for an inhomogeneous hyperbolic system on a real vector bundle over a smooth compact manifold, based on energy measurements for improperly known sources, is established. 
\end{abstract}

\section{Introduction and statement of result}

\label{sec_intr_1}

Let $M$ be a compact smooth manifold of dimension $n\ge 2$ without boundary,  and let $V$ be a smooth real vector bundle with a Riemannian structure over $M$. 
Assume that a smooth positive density $d\mu$ is given on $M$. 
Let $A:C^\infty(M,V)\to C^\infty(M,V)$ 
be  an elliptic formally self-adjoint second-order partial differential operator, acting on smooth sections of $V$.  
We assume that $A$ is formally positive, i.e., 
\begin{equation}
\label{eq_int_0}
(Au,u)_{L^2(M,V)}\ge c(u,u)_{L^2(M,V)},\quad u\in C^\infty(M,V),
\end{equation}
where $c>0$ and 
\[
(u,v)_{L^2(M,V)}=\int_M \langle u(x),v(x) \rangle_xd\mu(x),
\]
with $\langle\cdot,\cdot\rangle_x$ being the inner product in the fiber $\pi^{-1}(x)$ of the vector bundle $V$, assigned by the Riemannian structure.
Viewed as an unbounded operator on $L^2(M,V)$ and equipped with the domain $\mathcal{D}(A)=H^2(M,V)$,  the operator $A$ becomes self-adjoint  with a positive lower bound $c$ and a real discrete spectrum, see \cite{Shubin_book}.

 Let $F\in L^1(\R,L^2(M,V))$ with $\supp_t(F)\subset (-\tau,+\infty)$ for some large $\tau=\tau(F)>0$.  Here $\supp_t (F)$ stands the time support of $F$. 
Associated with the operator $A$, we consider the following hyperbolic initial value problem
\begin{equation}
\label{eq_int_1}
\begin{cases}(\partial_t^2+A(x,\p_x))u(x,t)=F(x,t),\quad x\in M, \quad t> -\tau,\\
u(x,t)|_{t=-\tau}=0,\quad \partial_t u(x,t)|_{t=-\tau}=0.
\end{cases}
\end{equation}
The problem \eqref{eq_int_1} has a unique solution
\[
u=u^F\in C^1(\R,L^2(M,V))\cap C^0(\R,H^1(M,V)), 
\]
see 
\cite{Las_Lions_Trig}. 
The energy of the wave $u^F$ produced by the source $F$ is defined by the formula, 
\begin{equation}
\label{eq_int_2}
E_A(F;t)=\frac{1}{2}\int_M (\langle\partial_t u^F(x,t),\partial_t u^F(x,t)\rangle_x+\langle Au^F(x,t),u^F(x,t)\rangle_x)d\mu(x).
\end{equation}
 As the operator $A$ is self-adjoint, the energy 
is conserved, i.e  if $F=0$ for all $t\ge t_0$ then $E_A(F,t)=E_A(F,t_0)$, c.f.  \eqref{eq_conservation_energy} below.  

For $T\ge 0$,  we denote by 
\[
(\tau_T F)(x,t)=F(x,t+T),
\]
the time shift of a source $F$.

Let $\mathcal{B}_M=\{U_p, p=1,2,\dots\}$ be a basis for topology of $M$ such that $U_1=M$, and let $\mathcal{B}_{\R}=\{I_q, q=1,2,\dots\}$ be  a basis for topology  of $\R$ consisting of open bounded intervals.  For any $U_p\in \mathcal{B}_M$, $p\ge 2$,  and any $I_q\in\mathcal{B}_{\R}$, $q=1,2,\dots$, we assume that there is a countable set of sources 
$
\mathcal{F}_{V;U_p,I_q}\subset C^0(\overline{I_{q}},L^2(U_p,V))$, 
which is dense in $L^2(I_{q},L^2(U_p,V))$. Furthermore, corresponding
to the case when $p=1$, we assume that for any $I_q\in\mathcal{B}_{\R}$, $q=1,2,\dots$,   there is a countable set of sources
$\mathcal{F}_{V;M,I_q}\subset C^\infty_0(\overline{I_q},C^\infty(M, V))$,
which is dense in $C^\infty_0(\overline{I_q},C^\infty(M, V))$. Here 
\[
C^\infty_0(\overline{I_q},C^\infty(M, V))=\{F\in C^\infty(\R,C^\infty(M, V)),\supp_t(F)\subset \overline{I_q}\}.
\]
This assumption is generic in the sense that such countable families of sources  
can be almost surely generated by taking some sequences of realizations of suitable independent identically distributed Gaussian random variables, see Section \ref{sec_random_sources}. 

As the sets $\mathcal{F}_{V;U_p,I_q}$ are countable, we shall enumerate its elements, 
\[
\mathcal{F}_{V; U_p,I_q}=\{F^{p,q}_i:i=1,2,\dots\}, \quad p,q=1,2,\dots,
\]
and fix these enumerations.

For any $N\in \N$, $p_1,\dots,p_N$, $q_1,\dots,q_N$,  $i_1,\dots, i_N\in \N$, $0\le T_1,\dots, T_N\in \Q$, and $0\le t\in \Q$, such that the time supports of all time shifts $\tau_{T_k}F^{p_k,q_k}_{i_k}$ of the sources $F^{p_k,q_k}_{i_k}\in \mathcal{F}_{V;U_{p_k},I_{q_k}}$, $k=1,\dots,N$, are pairwise disjoint,  we introduce the energy functions
\begin{equation}
\label{eq_int_4}
\mathcal{E}^{(N)}_{V,A}:\bigg((p_1,q_1,i_1),\dots,(p_N,q_N, i_N), T_1,\dots, T_N; t\bigg)\mapsto E_A\bigg( \sum_{k=1}^N \tau_{T_k}F^{p_k,q_k}_{i_k}; t\bigg),
\end{equation}
where the energy $E_A$ is given by \eqref{eq_int_2}.

We shall assume that the manifold $M$ together with its  topological and differential structures, as well as a smooth positive density $d\mu$ on it are known.  
The purpose of this paper is to show that  given the energy functions $\mathcal{E}^{(N)}_{V,A}$ for all $N=1,2,\dots$, we can recover the vector bundle $V$ with a Riemannian  structure  over $M$, as well as the elliptic operator $A$.  By the determination of a vector bundle we understand the determination of an isomorphic copy of it. 

We would like to emphasize that the sources $F^{p_k,q_k}_{i_k}\in \mathcal{F}_{V;U_{p_k},I_{q_k}}$, which take values in an unknown vector bundle $V$,  are not given  in our problem.  However,   the knowledge of the energy functions $\mathcal{E}^{(N)}_{V,A}$ implies that together with the value of the energy  $E_A\bigg( \sum_{k=1}^N \tau_{T_k}F^{p_k,q_k}_{i_k}; t\bigg)$, we know the indices $i_k$, which enumerate  the sources $F^{p_k,q_k}_{i_k}$ in the set $\mathcal{F}_{V;U_{p_k},I_{q_k}}$, as well as the indices $p_k$ and $q_k$, which give us the information about the support of the source $F^{p_k,q_k}_{i_k}$, i.e.  $\supp(F^{p_k,q_k}_{i_k})\subset \overline{U_{p_k}}\times \overline{I_{q_k}} $,  the time shifts $T_1, \dots, T_N$, and the time $t$, which correspond to the measured energy.     

The main result of this paper is as follows. 

\begin{thm}  
\label{thm_main}

Let $M$ be a  compact smooth manifold of dimension $n\ge 2$ without boundary and let $d\mu$ be a smooth positive density on $M$.  Let $V_1$ and $V_2$ be real smooth vector bundles with Riemannian structures over $M$, and  let $A_j:C^\infty(M,V_j)\to C^\infty(M,V_j)$ 
be  an elliptic formally self-adjoint positive second-order partial differential operator on sections of $V_j$, $j=1,2$. 
Assume that for $V_j$, $j=1,2$,  and any $U_p\in\mathcal{B}_M$, $p\ge 2$, and any $I_q\in \mathcal{B}_{\R}$, $q=1,2,\dots$, 
there is a countable set of sources
$
\mathcal{F}_{V_j;U_p,I_q}\subset C^0(\overline{I_{q}},L^2(U_p,V_j))$, 
which is dense in $L^2(I_{q},L^2(U_p,V_j))$. 
Furthermore, for $V_j$, $j=1,2$,  and any $I_q\in \mathcal{B}_{\R}$, $q=1,2,\dots$, we assume that there is a countable set of sources
$\mathcal{F}_{V_j;M,I_q}\subset C_0^\infty(\overline{I_q},C^\infty(M, V_j))$,
which is dense in $C_0^\infty(\overline{I_q},C^\infty(M, V_j))$. 
If there are enumerations of the elements of the sets $\mathcal{F}_{V_1;U_p,I_q}$ and  $\mathcal{F}_{V_2;U_p,I_q}$, such that for the energy functions $\mathcal{E}^{(N)}_{V_1, A_1}$ and $\mathcal{E}^{(N)}_{V_2, A_2}$,  introduced in \eqref{eq_int_4},  we have  
\begin{equation}
\label{eq_int_5}
\mathcal{E}^{(N)}_{V_1, A_1}=\mathcal{E}^{(N)}_{V_2,A_2},\quad \textrm{for all}\ N=1,2,\dots,
\end{equation}
 then there is an isometry $\Phi:V_1\to V_2$ such that $A_2=\Phi A_1 \Phi^{-1}$. 
\end{thm}

\textbf{Remark.}
The equality  \eqref{eq_int_5} of the functions $\mathcal{E}^{(N)}_{V_j, A_j}$  signifies in particular that the domains of their definition agree.

The main motivation to study such kind of inverse problems comes from the fact that in many practical situations one can only measure the energy corresponding to some sources, but the exact form of the sources is not known.   Such poorly known sources are typical in physical problems of various scales. As a first example, let us mention that the precise parameters describing collisions of elementary particles in atomic physics are usually not known, and one just observes that the collision has occurred. On the macroscopic scale, electrodes used in electrical impedance tomography have unknown contact impedances. Furthermore, in oil exploration, one uses explosives as sources,  and hence the source is difficult to model mathematically. As the final example, for underwater air guns in imaging of the sea bottom, modeling the sources precisely is quite difficult. However, in all these instances, one has a good knowledge of the support of the source.

Theorem \ref{thm_main} can be useful in the following practical situation. Assume that in some physical experiment, one produces a suitable dense set of sources $\mathcal{F}_{V;U_{p_k},I_{q_k}}$, and measures the energy of the waves $u^F$, corresponding to the sources
\begin{equation}
\label{eq_sor_*}
F= \sum_{k=1}^N \tau_{T_k}F^{p_k,q_k}_{i_k},\quad F^{p_k,q_k}_{i_k}\in \mathcal{F}_{V;U_{p_k},I_{q_k}},
\end{equation}
such that the time supports of all time shifts $\tau_{T_k}F^{p_k,q_k}_{i_k}$ of the sources $F^{p_k,q_k}_{i_k}$ are pairwise disjoint.  One encodes the results of these measurements 
into the energy functions $\mathcal{E}^{(N)}_{V,A}$, $N=1,2,\dots$.  Our result shows that  given only the energy functions  $\mathcal{E}^{N}_{V,A}$, $N=1,2,\dots$,  we are able to determine the physical model (i.e., the vector bundle $V$ with the Riemannian  structure, and the operator $A$).

We shall now make a few comments, regarding the form of the sources \eqref{eq_sor_*}, for which our measurements are performed.  First,  our assumption that the energy measurement for a sum $F_1+F_2$ 
cannot be done unless the time supports of $F_1$ and $F_2$ are
disjoint is  motivated by  practical applications.  Indeed,  
if
$F_1$ and $F_2$ are sources produced by some devices $D_1$ and $D_2$, we cannot
usually put the device $D_1$ in the same location which $D_2$ occupies at the same time.
For instance, in  soil imaging we cannot implement two explosions at the same place
with a very small time difference.

The second fact that we would like to emphasize  is  that the data for our inverse problem 
does not contain 
energies corresponding to the sources 
\begin{equation}
\label{eq_sor_*2}
\sum_{k=1}^N a_{k} \tau_{T_k}F^{p_k,q_k}_{i_k},\quad a_k\in \R,
\end{equation}
i.e., arbitrary linear combinations of the sources $F^{p_k,q_k}_{i_k}$ and their time shifts.   This is again motivated by practical applications. Indeed,  if a source $F^{p_k,q_k}_{i_k}$ is poorly  
known, it may be difficult to implement a measurement of the form $aF^{p_k,q_k}_{i_k}$ with $a\in \R$.  For example,  for
 the wave equation 
\[
(\p_t^2-c(x)^2\Delta)u(x,t)=F(x,t),
\]
modeling the acoustic pressure $u(x,t)$, 
it may be easy to implement measurements with a source $F$ corresponding
to a local increase of the pressure, e.g. an explosion,
but difficult to implement the source $-F$ (an "anti-explosion"). 
However, in order to determine the vector bundle we have to find  energies, which correspond to the sources \eqref{eq_sor_*2}, at suitable times,  using our data. This requires careful considerations, which are carried out in Section \ref{sec_meas_conclusion}.

Similar problems of  recovering an unknown operator  and a Riemannian manifold  from energy measurements
have been encountered in inverse boundary value problems. 
The prototypical inverse problem here is the inverse conductivity problem, called also the  Calder\'on inverse problem (see \cite{Cal}).
Consider the conductivity equation
\beq\label{eq: conductivity}
\nabla\cdotp\sigma(x)\nabla u(x)&=&0\quad\hbox{in }D,\\
u|_{\p D}&=&f, \nonumber
\eeq
where $D\subset \R^n$ is a smooth bounded domain, $\sigma(x)$ is a positive-definite
 matrix corresponding to the conductivity at the point $x$, and
$u$ is the electric potential having boundary value $f$. The inverse conductivity problem consists of 
the determination of $\sigma$ from measurements  done at the boundary $\p D$.
One possibility to define the boundary measurements is the following: assume that for all boundary values $f$,
we measure
\[
Q_\sigma(f)=\int_{D} (\sigma(x)\nabla u(x))\cdotp \nabla u(x)\,dx.
\]
This is equivalent to measuring the power needed to keep the voltage $u$ at
the boundary $\p D$ to be equal to $f(x)$. Physically, the boundary 
value $u|_{\p D}=f$ is produced by electrodes attached on the boundary of the body,
and $Q_\sigma(f)$ corresponds
to the power of the heat produced by the caused currents in the domain $D$. Because of the conservation
of energy, the power produced by the heat is equal to the power needed to keep
the electrodes at the specified voltages. When $\sigma$ is equal to a scalar function  times
the identity matrix, the conductivity is called isotropic. In this case the 
measurements are known to determine the conductivity uniquely. This was established for sufficiently regular conductivities in \cite{syluhl87, nachman96}.   See also \cite{astpaiv06, Bukhgeim08, ImanUhlMas, KenSjostrandUhl07} for some additional important contributions.

 When the conductivity $\sigma$ in (\ref{eq: conductivity}) is matrix
 valued, i.e. anisotropic,  one knows  that
the conductivity $\sigma$ cannot be determined uniquely and that the best one can
hope for is  to determine  $\sigma$ up to a diffeomorphism of the domain $D$, which preserves the boundary. This has been shown
in dimension $n= 2$ in \cite{AstPaivLass05, sunuhl03, Syl90}. In dimensions $n\geq 3$,  determining the conductivity
is equivalent to determining the isometry type of a smooth Riemannian metric corresponding to the
conductivity, which constitutes a longstanding open problem. 
For related works, 
see \cite{GrKurLassUhl09, HM08, KKL, LTU03, LU01}.

As another example of an inverse problem on a Riemannian manifold which motivates our study, let us  consider the wave equation, 
\beq
\label{eq: scalar_wave}
\begin{cases}(\partial_t^2-\Delta_g)u(x,t)=0\quad (x,t)\in M\times \R_+,\\
u|_{t=0}=0,\quad \partial_t u|_{t=0}=0, \\
u|_{\p M\times \R_+}=f,
\end{cases}
\eeq
on a compact Riemannian manifold $(M,g)$ with boundary $\p M$. Assume that we know 
the boundary $\p M$ and can measure for all $f\in C^\infty_0(\p M\times \R_+)$,
 the final energy $Q_g(f)$ 
of the wave $u$ produced by $f$, 
that is,
\ba
Q_g(f)=\lim_{t\to \infty} \frac 12\int_M (|\nabla_g u(x,t)|_g^2+|\p_t u(u,t)|^2)d\mu_g(x).
\ea
By energy conservation, this can be considered as the total energy needed
to force the boundary value $u|_{\p M\times \R_+}$ to be equal to $f$.
When $Q_g(f)$ is known for all $f\in C^\infty_0(\p M\times \R_+)$, then
 the isometry type of the Riemannian manifold $(M,g)$ can be recoved  \cite{KKLM04}, see also \cite{AKKLT04, KKL}.

This paper can be viewed as an extension of the class of inverse problems based on energy measurements to the setting of hyperbolic operators on general vector bundles.   Studying inverse problems in the geometric  context of vector bundles is  both natural and important, as for instance, in relativistic quantum mechanics and quantum field theory, phenomena are often modeled by PDE on sections of vector bundles.  The investigations in this work are further complicated by the fact that our energy measurements are performed for improperly known sources, which, as we argued above, is relevant for many practical applications.

Finally, we would like to mention the paper \cite{KL-Dirac},  where an  inverse problem for the Dirac equation on a vector bundle over a compact Riemannian manifold with nonempty boundary is investigated.

The paper is organized  as follows. In Section \ref{sec_definitions} we present some basic facts concerning real vector bundles and hyperbolic systems.  
Section \ref{sec_main_result} is devoted to the proof of Theorem \ref{thm_main}.  Finally, in Section \ref{sec_random_sources} we exploit probabilistic arguments to show that our assumptions on the existence of countable dense sets of sources is generic, in the sense that the latter can be realized almost surely, by taking sequences of independent identically distributed Gaussian random variables.

\section{Preliminaries}

\label{sec_definitions}

\subsection{Some facts on real  vector bundles}

Let $M$ be a compact smooth manifold of dimension $n\ge 2$ without boundary and let $V$ be a smooth real vector bundle over $M$. We denote by $\pi:V\to M$ the projection onto the base manifold. Each fiber $\pi^{-1}(x)$ has a structure of a real vector space isomorphic to  $\mathbb{R}^d$ and carries  an inner product $\langle\cdot,\cdot\rangle_x$, smoothly depending on $x$, given by the  Riemannian structure.

Given two vector bundles $\pi_1:V_1\to M$ and $\pi_2:V_2\to M$, a smooth map
$\Phi:V_1\to V_2$ is a vector bundle homomorphism if $\pi_1=\pi_2\circ\Phi$ and 
$\Phi$ restricts to a linear map $\pi_1^{-1}(x)\to \pi_2^{-1}(x)$ on each fiber. 
A bundle homomorphism $\Phi:V_1\to V_2$ with an inverse which is also a bundle homomorphism is called a vector bundle isomorphism, and then $V_1$ and $V_2$ are said to be isomorphic vector bundles. If $\Phi$ preserves the Riemannian structure, then $\Phi$ is called an isometry. 

Let $\{(U_\alpha,\phi_\alpha)\}$ be an atlas of a vector bundle $V$, i.e.  $\{U_\alpha\}$ is an open cover of $M$ and $\phi_\alpha:\pi^{-1}(U_\alpha)\to U_\alpha\times \mathbb{R}^d$ are local trivializations.
For any two vector bundle charts $(U_\alpha,\phi_\alpha)$ and $(U_\beta,\phi_\beta)$ such that $U_\alpha\cap U_\beta\ne\emptyset$, the composition $\phi_\alpha\circ\phi^{-1}_\beta:(U_\alpha\cap U_\beta)\times \mathbb{R}^d\to (U_\alpha\cap U_\beta)\times \mathbb{R}^d$  must be of the form $\phi_\alpha\circ\phi^{-1}_\beta(x,v)=(x,t_{\alpha\beta}(x)v)$ for some smooth map $t_{\alpha\beta}:U_\alpha\cap U_\beta\to \textrm{GL}(d,\mathbb{R})$ which is called a transition map. For a given vector bundle atlas the family of transition maps always satisfies the following properties:

\begin{equation}
\label{eq_transition}
\begin{aligned}
t_{\alpha\alpha}(x)&=\text{Id}\text{ for all }x\in U_\alpha\text{ and all }\alpha;\\
t_{\alpha\beta}(x)\circ t_{\beta\alpha}(x)&=\text{Id}\text{ for all }x\in U_\alpha\cap U_\beta;\\
t_{\alpha\gamma}(x)\circ t_{\gamma\beta}(x)\circ t_{\beta\alpha}(x)&=\text{Id}\text{ for all }x\in U_\alpha\cap U_\beta\cap U_\gamma.
\end{aligned}
\end{equation}

A family of maps $\{t_{\alpha\beta}\}$, $t_{\alpha\beta}:U_\alpha\cap U_\beta\to \textrm{GL}(d,\mathbb{R})$ which satisfies \eqref{eq_transition} for some cover of $M$ is called a $\textrm{GL}(d,\mathbb{R})$-\emph{cocycle}.

The following theorem  gives the minimal information  required to reconstruct a vector bundle up to an isomorphism and it will be used in the proof of Theorem \ref{thm_main}. 
\begin{thm}{ \cite[Section 9.2.2]{nakahara}}
\label{thm_geometry1}
Given a manifold $M$, a cover $\{U_\alpha\}$ of $M$ and  a $\textrm{GL}(d,\mathbb{R})$-cocycle $\{t_{\alpha\beta}\}$
for the cover, there exists a vector bundle with an atlas $\{(U_\alpha,\phi_ \alpha)\}$ satisfying $\phi_\alpha\circ\phi^{-1}_\beta(x,v)=(x,t_{\alpha\beta}(x)v)$ on nonempty overlaps $U_\alpha\cap U_\beta$. A vector bundle constructed in such a way is unique up to an isomorphism. 
\end{thm}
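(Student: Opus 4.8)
The plan is to prove existence by the standard clutching (gluing) construction and uniqueness by patching together local isomorphisms. For existence, I would first form the disjoint union $E=\bigsqcup_\alpha (U_\alpha\times\mathbb{R}^d)$, whose points I denote by triples $(\alpha,x,v)$ with $x\in U_\alpha$ and $v\in\mathbb{R}^d$. On $E$ I would define a relation by declaring, for $x\in U_\alpha\cap U_\beta$,
\[
(\beta,x,v)\sim(\alpha,x,t_{\alpha\beta}(x)v).
\]
The three properties in \eqref{eq_transition} are exactly what is needed to make $\sim$ an equivalence relation: the first identity gives reflexivity, the second gives symmetry (after noting that it says $t_{\alpha\beta}(x)=t_{\beta\alpha}(x)^{-1}$), and the third, once rewritten via the inverse relations as the composition law $t_{\alpha\gamma}(x)=t_{\alpha\beta}(x)t_{\beta\gamma}(x)$, gives transitivity. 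I would then set $V=E/\!\sim$ with the quotient topology and let $\pi:V\to M$ be induced by the projections $(\alpha,x,v)\mapsto x$, which is well defined since equivalent points share the same base point.

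Next I would produce the charts. Let $q:E\to V$ be the quotient map; its restriction to each slice $U_\alpha\times\mathbb{R}^d$ is injective (no two points of a single slice are identified, because $t_{\alpha\alpha}=\mathrm{Id}$) with image exactly $\pi^{-1}(U_\alpha)$, so I would define $\phi_\alpha:\pi^{-1}(U_\alpha)\to U_\alpha\times\mathbb{R}^d$ to be the inverse of this restriction. A direct computation from the equivalence relation then shows $\phi_\alpha\circ\phi_\beta^{-1}(x,v)=(x,t_{\alpha\beta}(x)v)$ on $U_\alpha\cap U_\beta$, as required. Since the $t_{\alpha\beta}$ are smooth and $\mathrm{GL}(d,\mathbb{R})$-valued, these transition maps are diffeomorphisms, and I would use them to transport the product smooth and vector-space structures to $V$, making $\pi:V\to M$ a smooth rank-$d$ real vector bundle. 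Finally, a fiberwise positive-definite inner product is obtained by gluing the standard inner products on the trivializations with a partition of unity subordinate to $\{U_\alpha\}$, yielding the Riemannian structure.

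For uniqueness, suppose $V$ and $V'$ carry atlases $\{(U_\alpha,\phi_\alpha)\}$ and $\{(U_\alpha,\phi'_\alpha)\}$ realizing the same cocycle on the same cover. On each $\pi^{-1}(U_\alpha)$ I would set $\Phi_\alpha=(\phi'_\alpha)^{-1}\circ\phi_\alpha$. To check that these agree on overlaps, on $\pi^{-1}(U_\alpha\cap U_\beta)$ I would write
\[
\Phi_\beta=(\phi'_\beta)^{-1}\circ\phi_\beta=(\phi'_\alpha)^{-1}\circ\bigl(\phi'_\alpha\circ(\phi'_\beta)^{-1}\bigr)\circ\bigl(\phi_\beta\circ\phi_\alpha^{-1}\bigr)\circ\phi_\alpha;
\]
the two middle factors are multiplication by $t_{\alpha\beta}(x)$ and by $t_{\beta\alpha}(x)=t_{\alpha\beta}(x)^{-1}$, so they cancel and $\Phi_\beta=(\phi'_\alpha)^{-1}\circ\phi_\alpha=\Phi_\alpha$. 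Hence the $\Phi_\alpha$ glue to a global map $\Phi:V\to V'$ which is fiberwise linear, invertible, and covers the identity on $M$, i.e.\ a vector bundle isomorphism.

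The step I expect to demand the most care is the verification that $\sim$ is a genuine equivalence relation and that the quotient is locally trivial: the cocycle conditions \eqref{eq_transition} do all the algebraic work, but establishing symmetry and transitivity requires correctly tracking the inverse relations $t_{\alpha\beta}=t_{\beta\alpha}^{-1}$ and the composition law hidden in the third identity, and one must further confirm that the quotient topology restricts to the product topology on each $\pi^{-1}(U_\alpha)$ so that the $\phi_\alpha$ are genuine homeomorphisms.
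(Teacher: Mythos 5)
Your proof is correct: the clutching construction $V=\bigl(\bigsqcup_\alpha U_\alpha\times\mathbb{R}^d\bigr)/\!\sim$ with the cocycle identities supplying reflexivity, symmetry, and transitivity, followed by patching the local isomorphisms $(\phi'_\alpha)^{-1}\circ\phi_\alpha$ for uniqueness, is exactly the standard argument. The paper itself does not prove this statement but quotes it from the cited reference, and that reference's proof is the same construction you give, so there is nothing to reconcile.
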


\subsection{Some facts on hyperbolic systems}

Let $M$ be a compact smooth manifold of dimension $n\ge 2$ without boundary,  and let $V$ be a smooth real vector bundle with a Riemannian structure over $M$. 

Let $A:C^\infty(M,V)\to C^\infty(M,V)$ 
be  an elliptic formally self-adjoint  second-order partial differential operator, acting on smooth sections of $V$.  
We assume that $A$ is formally positive in the sense of \eqref{eq_int_0}. 
Viewed as an unbounded operator on $L^2(M,V)$ and equipped with the domain $\mathcal{D}(A)=H^2(M,V)$,  the operator $A$ becomes self-adjoint  with a positive lower bound $c$,
\[
(Au,u)_{L^2(M,V)}\ge c\|u\|_{L^2(M,V)}^2,\quad u\in \mathcal{D}(A), 
\]
 and  a real discrete spectrum, see \cite{Shubin_book}.  

In fact the ellipticity shows that this bound can be strengthened to the following estimate,
\begin{equation}
\label{eq_3_1_1}
(Au,u)_{L^2(M,V)}\ge c_0\|u\|_{H^1(M,V)}^2,\quad u\in \mathcal{D}(A), \quad c_0>0,
\end{equation}
see \cite{Shubin_book}.  For the upper bound we have
\begin{equation}
\label{eq_3_1_2}
(Au,u)_{L^2(M,V)}=\|A^{1/2}u\|_{L^2(M,V)}^2\le C\|u\|_{H^1(M,V)}^2, \quad u\in \mathcal{D}(A),\quad  C>0,
\end{equation}
since $A^{1/2}$ is an  elliptic pseudodifferential operator of order one, see \cite{Shubin_book}. 

Furthermore, we have $\mathcal{D}(A^k)=H^{2k}(M,V)$, $k=1,2,\dots$, where $\mathcal{D}(A^k)$ is considered as a Hilbert space with the graph norm 
$(\|u\|^2_{L^2(M,V)}+\|A^k u\|^2_{L^2(M,V)})^{1/2}$, see \cite{KKL}. This together with the fact that $A^k$ is positive implies that 
\begin{equation}
 \label{eq_norms_1}
\|A^k u\|_{L^2(M,V)}^2\asymp \|u\|_{H^{2k}(M,V)}^2, \quad u\in \mathcal{D}(A^k), \quad k=1,2,\dots.
\end{equation}
Here the notation  $a \asymp b$ states that    $C_1a  \le b\le C_2a$ for some constants $C_1,C_2>0$.

We also have for $k=0,1,2,\dots$,
\begin{equation}
 \label{eq_norms_2}
  ( A(A^ku),A^ku)_{L^2(M,V)}\asymp\|A^ku\|_{H^1(M,V)}^2\asymp \|u\|_{H^{2k+1}(M,V)}^2,\quad  u\in H^{2k+1}(M,V), 
 \end{equation}
 see \cite{KKL}.

We shall need the following standard result in the theory of hyperbolic equations, see \cite{Las_Lions_Trig} and \cite[Thm.\ 23.2.2, Lem.\ 23.2.1]{hor_book_III} for scalar equations and
\cite[App.\ III.3]{C-B} for equations on vector bundles.

\begin{thm}
\label{lem_estimates}
Let $s\ge 0$ and $F\in L^1(\R,H^s(M,V))$ with $\supp_t(F)\subset (-\tau,+\infty)$ for some $\tau>0$.  Then the hyperbolic initial value problem
\eqref{eq_int_1} has 
a unique solution
\[
u^F\in C^1(\R,H^s(M,V))\cap C^0(\R,H^{s+1}(M,V)).
\]
Furthermore,  $u^F$ satisfies the hyperbolic estimates,  
\begin{equation}
\label{eq_estimates}
\begin{aligned}
&\|u^F(\tau_0)\|_{H^{s+1}(M,V)}\le C_{s,\tau_0}\|F\|_{L^1(\R,H^{s}(M,V))},\\
& \|\partial_t u^F(\tau_0)\|_{H^{s}(M,V)}\le C_{s,\tau_0}\|F\|_{L^1(\R,H^{s}(M,V))}, 
\end{aligned}
\end{equation}
for any $\tau_0\in \R$. 

\end{thm} 

\textbf{Remark.} 
The definition of the energy \eqref{eq_int_2} and Theorem \ref{lem_estimates} imply immediately that the function $t\mapsto E_A(F,t)$ is continuous.

Let $I_1=(t_1^-,t_1^+)$ and $I_2=(t_2^-,t_2^+)$ be bounded open  intervals in $\R$ and $\tau_0\in \R$.  In what follows we
write
\begin{align*}
&\overline{I_1}<\overline{I_2}\quad \text{if}\quad t_1^+< t_2^-,\\
&\overline{I_1}<\tau_0\quad \text{if}\quad t_1^+<\tau_0.
\end{align*}

In the course of proving Theorem \ref{thm_main}, we shall need the following result. 

\begin{lem} 
(i). Let $F_i\in L^1(\R,L^2(M,V))$ with time support bounded from below, $i=1,2$. Then for any time $\tau_0\in \R$, we have, for the both choices of the sign, 
\begin{equation}
\label{eq_energy_inner_1}
E_A(F_1\pm F_2 ,\tau_0)\asymp 
\|u^{F_1}(\tau_0)\pm u^{F_2}(\tau_0)\|_{H^{1}(M,V)}^2+ \|\partial_t u^{F_1}(\tau_0)\pm\partial_t u^{F_2}(\tau_0)\|_{L^2(M,V)}^2.
\end{equation}
(ii). Let 
$F_i\in C^\infty_0(\overline{I_i}, C^\infty(M,V))$ for some bounded open interval $I_i\subset \R$, $i=1,2$. Then 
for any $\tau_0\in \R$, $\overline{I_i}<\tau_0$, $i=1,2$,  and any $s=1,2,\dots$, 
 we have, for the both choices of the sign, 
\begin{equation}
\label{eq_energy_inner}
E_A(\partial_t^s(F_1\pm F_2) ,\tau_0)\asymp 
\|u^{F_1}(\tau_0)\pm u^{F_2}(\tau_0)\|_{H^{s+1}(M,V)}^2+ \|\partial_t u^{F_1}(\tau_0)\pm\partial_t u^{F_2}(\tau_0)\|_{H^{s}(M,V)}^2.
\end{equation}

 \end{lem}

\begin{proof} (i).  By the definition of the energy we write
\begin{align*}
E_A(F_1\pm F_2,\tau_0)=&\frac{1}{2}\|\p_t u^{F_1}(\tau_0)\pm \p_t u^{F_2}(\tau_0)\|_{L^2(M,V)}^2\\
&+\frac{1}{2}(A(u^{F_1}(\tau_0)\pm u^{F_2}(\tau_0)), u^{F_1}(\tau_0)\pm u^{F_2}(\tau_0))_{L^2(M,V)},
\end{align*}
for the both choices of the sign.  Together with \eqref{eq_3_1_1} and \eqref{eq_3_1_2} implies \eqref{eq_energy_inner_1}. 

(ii).  We have
\begin{equation}
\label{eq_3_1_3}
\begin{aligned}
E_A(\p_t^s(F_1\pm F_2),&\tau_0)=\frac{1}{2}\|\p_t \p_t^s u^{F_1}(\tau_0)\pm \p_t\p_t^s u^{F_2}(\tau_0)\|_{L^2(M,V)}^2\\
&+\frac{1}{2}(A(\p_t^s u^{F_1}(\tau_0)\pm \p_t^s u^{F_2}(\tau_0)), \p_t^s u^{F_1}(\tau_0)\pm \p_t^s u^{F_2}(\tau_0))_{L^2(M,V)}.
\end{aligned}
\end{equation}
As $\overline{I_i}<\tau_0$, we get
 \begin{equation}
 \label{eq_op}
 \partial_t^{2k} u^{F_j}(\tau_0)=(-1)^kA^ku^{F_j}(\tau_0), \quad k=1,2,\dots.
 \end{equation}

 Assume that $s$ is odd, i.e. 
 $s=2k+1$ for some $k=0,1,\dots$. The case of even $s$ can be treated in a similar way.   
 It follows from \eqref{eq_3_1_3} and 
 \eqref{eq_op} 
 that 
\begin{align*}
E_A(\p_t^s(F_1&\pm F_2),\tau_0)=\frac{1}{2}\|A^{k+1}( u^{F_1}(\tau_0)\pm  u^{F_2}(\tau_0))\|_{L^2(M,V)}^2\\
&+\frac{1}{2}(A A^k(\p_t u^{F_1}(\tau_0)\pm \p_t u^{F_2}(\tau_0)), A^k(\p_t u^{F_1}(\tau_0)\pm \p_t u^{F_2}(\tau_0)))_{L^2(M,V)}.
\end{align*}
This together with  \eqref{eq_norms_1} and  \eqref{eq_norms_2} implies that 
\begin{align*}
E_A(\p_t^s(F_1\pm F_2),\tau_0)\asymp & \| u^{F_1}(\tau_0)\pm  u^{F_2}(\tau_0)\|_{H^{2k+2}(M,V)}^2\\
&+\| \p_t u^{F_1}(\tau_0)\pm  \p_t u^{F_2}(\tau_0)\|_{H^{2k+1}(M,V)}^2,
\end{align*}
which shows \eqref{eq_energy_inner}. The proof is complete. 

 \end{proof}

The following result is needed when proving Theorem \ref{thm_main}.

\begin{lem}
\label{lem_sources_sequences} Let $s\ge 0$,  $F\in L^1(\R, H^s(M, V))$ with $\supp_t(F)\subset \overline{I_0}$, and $\tau_0\in \R$ such that $\overline{I_0}<\tau_0$. 
Then for any open interval $I_1\in\mathcal{B}_{\R}$ such that  $\overline{I_0}<\overline{I_1}<\tau_0$ and any $T\ge 0$,  there are sequences of sources
$F_j, G_j\in\mathcal{F}_{V; M,I_1}$, $j=1,2,\dots$,  such that 
\begin{equation}
\label{eq_source_-F}
(\lim_{j\to \infty} u^{F_j}(\tau_0), \lim_{j\to \infty}\partial_t u^{F_j}(\tau_0))=(u^{-\tau_TF}(\tau_0),\partial_t u^{-\tau_TF}(\tau_0))
\end{equation}
in  $H^{s+1}(M,V)\times H^s(M,V)$, and 
\begin{equation}
\label{eq_source_F}
(\lim_{j\to \infty} u^{G_j}(\tau_0), \lim_{j\to \infty}\partial_t u^{G_j}(\tau_0))=(u^{\tau_T F}(\tau_0),\partial_t u^{\tau_T F}(\tau_0)),
\end{equation}
in  $H^{s+1}(M,V)\times H^s(M,V)$
\end{lem}

\begin{proof} 
Let 
$\psi\in C^\infty(\mathbb{R},[0,1])$  be such that $\psi(t)=0$, when $t\le t_1^-$, and $\psi(t)=1$, when $t\ge t_1^+$. Assume also that $\supp(\psi')\subset I_1$. 
Then $u^{\tilde F}=-\psi u^{\tau_TF}$
is a solution to  \eqref{eq_int_1} with the right-hand side 
\[
\tilde F=-2\partial_t u^{\tau_TF}\partial_t\psi-u^{\tau_TF}\partial_t^2\psi-\psi\tau_TF=-2\partial_t u^{\tau_TF}\partial_t\psi-u^{\tau_TF}\partial_t^2\psi.
\]
We have
$\tilde F\in C^0(\R, H^s(M,V))$ and 
 $\supp_t(\tilde F)\subset I_1$.  

As the set $\mathcal{F}_{V; M,I_1}$  is dense in $C^\infty_0(\overline{I_1},C^\infty(M,V))$, there are sources $F_j\in\mathcal{F}_{M,I_1}$ such that $F_j\to \tilde F$ in $C^0(\R, H^s (M, V) )$  as $j\to\infty$.
Using the hyperbolic estimates \eqref{eq_estimates} and the fact that 
\[
(u^{\tilde F}(\tau_0),\partial_t u^{\tilde F}(\tau_0))=(u^{-\tau_TF}(\tau_0),\partial_t u^{-\tau_T F}(\tau_0)),
\]
we get \eqref{eq_source_-F}.  The existence of a sequence $G_j\in\mathcal{F}_{V; M,I_1}$ which satisfies \eqref{eq_source_F} can be seen in the same way. The proof is complete. 
\end{proof}

\section{Proof of Theorem \ref{thm_main}.}

\label{sec_main_result}

Recall that we are given a compact smooth manifold $M$ of dimension $n\ge 2$ without boundary together with its  topological and differential structures as well as a smooth positive density $d\mu$ on it. 
Furthermore, for any $N=1,2,\dots$, we are given the energy functions $\mathcal{E}^{(N)}_{V, A}$,  defined in \eqref{eq_int_4}  using the countable sets of sources $\mathcal{F}_{V; U_p,I_q}$, $p,q=1,2,\dots$.

\subsection{Information, obtained from the knowledge of $\mathcal{E}^{(N)}_{V, A}$}

\label{sec_meas_conclusion}

The purpose of this subsection is to  analyze what kind of information can be determined from the knowledge of $\mathcal{E}^{(N)}_{V, A}$.

First,  since solutions to hyperbolic equations depend continuously on data, c.f.  Theorem \ref{lem_estimates}, it is clear that the knowledge of the energy functions $\mathcal{E}^{(N)}_{V, A}$ can be extended to all $0\le T_1,\dots, T_N\in\R$ and $0\le t\in \R$,  such that the time supports of all time shifts $\tau_{T_k} F_{i_k}^{p_k,q_k}$, $k=1,\dots, N$,  of the sources $F_{i_k}^{p_k,q_k}\in \mathcal{F}_{V; U_{p_k},I_{q_k}}$ are pairwise disjoint.

Notice that in order to define the energy functions $\mathcal{E}^{(N)}_{V,A}$ in Section \ref{sec_intr_1}, we have first fixed enumerations for  $\mathcal{B}_{M}$, $\mathcal{B}_{\R}$ 
and $\mathcal{F}_{V;U_p,I_q}$. This means that in a triple $(p,q,i)\in \N\times \N\times \N$, $p$ labels  open sets $U_p$ in the basis $\mathcal{B}_{M}$ for topology on $M$,  
$q$ labels bounded open  intervals $I_q$ in the basis for topology on $\R$, and $i$ labels  the sources $F^{p,q}_{i}$ in the set $\mathcal{F}_{V;U_p,I_q}$.  
Hence, we have the following identification:
\begin{equation}
\label{eq_iden_A}
(p,q,i)\quad \longleftrightarrow\quad F^{p,q}_{i}\in \mathcal{F}_{V;U_p,I_q}.
\end{equation}
Furthermore, for any $p\in \N$ and $q\in \N$,  when writing  $U_p$ and $I_q$, we mean that $U_p\in \mathcal{B}_{M}$ and $I_q\in \mathcal{B}_\R$.

Let $X=\{(i_j)_{j=1}^{\infty}:i_j\in\N\}$
 be the set of all sequences of integers. 
Let $(p_0,q_0,i_0)\in \N\times \N\times \N$ and let $\tau_0\in\R$ be such that $\overline{I_{q_0}}<\tau_0$.  Assume that $T\ge 0$ and $q_1\in\N$ are  such that $\overline{I_{q_0}}<\overline{I_{q_1}}<\tau_0$. Consider the set
\[
X_{(p_0,q_0,i_0),q_1,T,\tau_0}=\{(i_j)_{j=1}^\infty\in X: \lim_{j\to \infty} E_A(F^{1,q_1}_{i_j}+\tau_T F^{p_0,q_0}_{i_0},\tau_0)=0\},
\]
where $F^{1,q_1}_{i_j}\in \mathcal{F}_{V;M,I_{q_1}}$ and $F^{p_0,q_0}_{i_0}\in \mathcal{F}_{V;U_{p_0},I_{q_0}}$.

First we claim that the set $X_{(p_0,q_0,i_0),q_1,T,\tau_0}\ne\emptyset$. This follows immediately from Lemma \ref{lem_sources_sequences}
with the help of  \eqref{eq_energy_inner_1}.

\begin{lem}
\label{lem_sequences_uniqueness}

Given the energy functions $\mathcal{E}^{(N)}_{V,A}$, $N=1,2,\dots$, we can determine the set $X_{(p_0,q_0,i_0),q_1,T,\tau_0}$. \footnote{Here and in what follows when we say that given the energy functions $\mathcal{E}^{(N)}_{V,A}$,  we can determine a set or a function,  we mean a uniqueness statement: if $\mathcal{E}^{(N)}_{V_1,A_1}=\mathcal{E}^{(N)}_{V_2,A_2}$ then the sets or functions in question, corresponding to the different vector bundles,  are equal.}

\end{lem}

\begin{proof}

The knowledge of  the functions $\mathcal{E}^{(N)}_{V,A}$ implies the knowledge of the values $E_A(F^{1,q_1}_{i_j}+\tau_T F^{p_0,q_0}_{i_0},\tau_0)$ for any index $i_j$, since $\supp_t(\tau_T F^{p_0,q_0}_{i_0})\cap\supp_t(F^{1,q_1}_{i_j})=\emptyset$, $T\ge 0$. Thus, for any sequence $(i_j)_{j=1}^\infty\in X$, we can determine whether it belongs to the set $X_{(p_0,q_0,i_0),q_1,T,\tau_0}$.  In this sense, we can construct the set $X_{(p_0,q_0,i_0),q_1,T,\tau_0}$.

\end{proof}

Recall that the energy functions $\mathcal{E}^{(N)}_{V, A}$ encode only information about the energy, which is produced by the sources of the form 
$F= \sum_{k=1}^N \tau_{T_k}F^{p_k,q_k}_{i_k},\quad F^{p_k,q_k}_{i_k}\in \mathcal{F}_{V;U_{p_k},I_{q_k}}$.  However, in order to recover the vector bundle $V$, we need to know values of the energy for an arbitrary linear combination of the form
$\sum_{k=1}^N a_{k} \tau_{T_k}F^{p_k,q_k}_{i_k}$, $a_k\in \R$, evaluated at least at a suitable time $\tau_0$. This is our next goal. 

\begin{lem}
\label{lem_energy_F_1-F_2}  

Given the energy functions $\mathcal{E}^{(N)}_{V,A}$, $N=1,2,\dots$, for any $(p_1,q_1,i_1)\in \N\times\N\times\N$, $(p_2,q_2,i_2)\in \N\times \N\times\N$, $T\ge 0$ and $\tau_0\in \R$ such that $\overline{I_{q_k}}<\tau_0$, $k=1,2$, we can determine the functions
\[
((p_1,q_1,i_1), (p_2,q_2,i_2),T;\tau_0)\mapsto E_A(F_{i_1}^{p_1,q_1}-\tau_T F_{i_2}^{p_2,q_2},\tau_0).
\]

\end{lem}

\begin{proof}
Let $q_0\in \N$ be such that $\overline{I_{q_k}}<\overline{I_{q_0}}<\tau_0$, $k=1,2$.  Then by Lemma \ref{lem_sequences_uniqueness}, using the energy functions $\mathcal{E}^{(N)}_{V,A}$, $N=1,2,\dots$, we can find a sequence $(i_j)_{j=1}^\infty\in X$ such that 
\[
\lim_{j\to\infty} E_A(F_{i_j}^{1,q_0}+\tau_T F^{p_2,q_2}_{i_2},\tau_0)=0. 
\]
Thus,
\begin{equation}
\label{eq_4_1}
E_A(F^{p_1,q_1}_{i_1}-\tau_T F^{p_2,q_2}_{i_2},\tau_0)=\lim_{j\to \infty} E_A(F^{p_1,q_1}_{i_1}+F_{i_j}^{1,q_0}).
\end{equation}
As the time supports of $F^{p_1,q_1}_{i_1}$ and $F_{i_j}^{1,q_0}$ are disjoint, the  energy in \eqref{eq_4_1} can be computed using $\mathcal{E}^{(N)}_{V,A}$. The proof is complete. 
\end{proof}

Let $(p_0,q_0,i_0)\in \N\times \N\times \N$ and $\tau_0\in\R$ be such that $\overline{I_{q_0}}<\tau_0$.  Let $T\ge 0$ and $q_1\in\N$ be such that $\overline{I_{q_0}}<\overline{I_{q_1}}<\tau_0$. Consider the set
\[
Y_{(p_0,q_0,i_0),q_1,T,\tau_0}=\{(i_j)_{j=1}^\infty\in X: \lim_{j\to \infty} E_A(F^{1,q_1}_{i_j}-\tau_T F^{p_0,q_0}_{i_0},\tau_0)=0\},
\]
where $F^{1,q_1}_{i_j}\in \mathcal{F}_{V;M,I_{q_1}}$ and $F^{p_0,q_0}_{i_0}\in \mathcal{F}_{V;U_{p_0},I_{q_0}}$. 
It  follows immediately from \eqref{eq_source_F} in  Lemma \ref{lem_sources_sequences}
 that  $Y_{(p_0,q_0,i_0),q_1,T,\tau_0}\ne\emptyset$.

\begin{lem}

\label{lem_sequences_uniqueness_2}
Given the energy functions $\mathcal{E}^{(N)}_{V,A}$, $N=1,2,\dots$, we can determine the set $Y_{(p_0,q_0,i_0),q_1,T,\tau_0}$.

\end{lem}

\begin{proof}

By Lemma \ref{lem_energy_F_1-F_2}, the knowledge of  the functions $\mathcal{E}^{(N)}_{V,A}$ implies the knowledge of the values $E_A(F^{1,q_1}_{i_j}-\tau_T F^{p_0,q_0}_{i_0},\tau_0)$ for any index $i_j$. 
Hence, for any sequence $(i_j)_{j=1}^\infty\in X$, we can determine whether  it belongs to the set $Y_{(p_0,q_0,i_0),q_1,T,\tau_0}$, and therefore, we can determine the set $Y_{(p_0,q_0,i_0),q_1,T,\tau_0}$.

\end{proof}

\begin{lem}
\label{cor_possible_sums_of_energy}
Given the energy functions $\mathcal{E}^{(N)}_{V,A}$, $N=1,2,\dots$, for any $N=1,2,\dots$, $(p_k,q_k,i_k)\in \N\times\N\times\N$, $k=1,\dots, N$, $0\le T_1,\dots, T_N\in \R$, $a_1,\dots,a_N\in \Q$, and $\tau_0\in\R$ such that $\overline{I_{q_k}}<\tau_0$, $k=1,\dots,N$, we can determine the functions
\begin{align*}
\bigg((p_1,q_1,i_1),\dots, (p_N,q_N,i_N),T_1,\dots, T_N, a_1,\dots,a_N; \tau_0 \bigg)\\
\mapsto E_A\bigg( \sum_{k=1}^N a_k\tau_{T_k}F^{p_k,q_k}_{i_k}; \tau_0\bigg). 
\end{align*}

\end{lem}

\begin{proof}
Let $a_k=m_k/n_k$, $m_k\in \Z$, $n_k\in \N$, $k=1,\dots,N$, and $l=\Pi_{k=1}^N n_k$.  Then we have
\[
E_A\bigg( \sum_{k=1}^N a_k\tau_{T_k}F^{p_k,q_k}_{i_k}; \tau_0\bigg)=l^{-2}E_A\bigg( \sum_{k=1}^N \tilde m_k\tau_{T_k}F^{p_k,q_k}_{i_k}; \tau_0\bigg),
\]
where $\tilde m_k=a_kl\in \Z$. We write
\[
E_A\bigg( \sum_{k=1}^N \tilde m_k\tau_{T_k}F^{p_k,q_k}_{i_k}; \tau_0\bigg)=E_A\bigg( \sum_{k=1}^N \sum_{s=1}^{|\tilde m_k|} \textrm{sign}(\tilde m_k)\tau_{T_k}F^{p_k,q_k}_{i_k}; \tau_0\bigg).
\]
Let $r_{ks}\in \N$,  $s=1,\dots,|\tilde m_k|$, $k=1,\dots,N$,  be such that for all $l=1,\dots,N$,
\begin{align*}
\overline{I_{q_l}}<\overline{I_{r_{11}}}<\dots <\overline{I_{r_{1|\tilde m_1|}}}< \overline{I_{r_{21}}}<\dots <\overline{I_{r_{2|\tilde m_2|}}}<\dots <\overline{I_{r_{N1}}}<\dots <\overline{I_{r_{N|\tilde m_N|}}}<\tau_0.
\end{align*}
Then by Lemmas \ref{lem_sequences_uniqueness} and \ref{lem_sequences_uniqueness_2}, the knowledge of the energy functions $\mathcal{E}^{(N)}_{V,A}$ allows us to find sequences $(i_{ks,j})_{j=1}^\infty\in X$ such that 
for $s=1,\dots,|\tilde m_k|$, $k=1,\dots,N$, 
\[
\lim_{j\to \infty} E_A(F^{1,r_{ks}}_{i_{ks,j}}  -\textrm{sign}(\tilde m_k)\tau_{T_k}F^{p_k,q_k}_{i_k};\tau_0)=0.
\]
Here $F^{1,r_{ks}}_{i_{ks,j}}\in \mathcal{F}_{V; M, I_{r_{ks}}}$. 
Hence,
\[
E_A\bigg( \sum_{k=1}^N a_k\tau_{T_k}F^{p_k,q_k}_{i_k}; \tau_0\bigg)=l^{-2}\lim_{j\to\infty} E_A\bigg( \sum_{k=1}^N \sum_{s=1}^{|\tilde m_k|} F^{1,r_{ks}}_{i_{ks,j}}; \tau_0\bigg),
\]
which can be computed as the time supports of all sources $F^{1,r_{ks}}_{i_{ks,j}}$ are pairwise disjoint. 
The proof is complete.

\end{proof}

Since solutions to hyperbolic equations depend continuously on data, we get the following corollary. 

\begin{cor}
\label{cor_energy_h}
Given the energy functions $\mathcal{E}^{(N)}_{V,A}$, $N=1,2,\dots$, for any $N=1,2,\dots$, $(p_k,q_k,i_k)\in \N\times\N\times\N$, $k=1,\dots, N$, $0\le T_1,\dots, T_N\in \R$, $a_1,\dots,a_N\in \R$, and $\tau_0\in\R$ such that $\overline{I_{q_k}}<\tau_0$, $k=1,\dots,N$, we can determine the functions
\begin{align*}
\bigg((p_1,q_1,i_1),\dots, (p_N,q_N,i_N),T_1,\dots, T_N, a_1,\dots,a_N; \tau_0 \bigg)\\
\mapsto E_A\bigg( \sum_{k=1}^N a_k\tau_{T_k}F^{p_k,q_k}_{i_k}; \tau_0\bigg). 
\end{align*}

\end{cor}

In order to reconstruct the vector bundle $V$, we have to be able to compute the energy of  time derivatives of the sources from $\mathcal{F}_{V;M,I_q}$. We now proceed to do so. 

\begin{lem}
\label{lem_deriv}
Given the energy functions $\mathcal{E}^{(N)}_{V,A}$, $N=1,2,\dots$, for any $q_0\in \N$, $i_0\in\N$ and $\tau_0\in\R$ such that $\overline{I_{q_0}}<\tau_0$, and any $s=1,2,\dots$, we can determine the functions
\[
(q_0,i_0,s;\tau_0)\mapsto E_A(\p_t^s F^{1,q_0}_{i_0};\tau_0). 
\]
\end{lem}

\begin{proof} 

Using the forward difference approximation to the $s$-th order derivative and  Corollary \ref{cor_energy_h},  we can compute
\[
E_A(\p_t^s F^{1,q_0}_{i_0};\tau_0)=\lim_{h\to 0, h>0}\frac{1}{h^{2s}}E_A\bigg(\sum_{k=0}^s(-1)^k
\begin{pmatrix} s\\
k
\end{pmatrix} \tau_{(s-k)h}F^{1,q_0}_{i_0};\tau_0\bigg)
\]
where $\begin{pmatrix} s\\
k
\end{pmatrix}$ are the binomial coefficients. The proof is complete.

 \end{proof}

Similarly, arguing as in the proof of Lemma \ref{lem_deriv},  we have the following result. 

\begin{lem}
\label{lem_sec_3_1}
Given the energy functions $\mathcal{E}^{(N)}_{V,A}$, $N=1,2,\dots$, for any $q_1,q_2\in \N$, $i_1,i_2\in\N$ and $\tau_0\in\R$ such that $\overline{I_{q_k}}<\tau_0$, $k=1,2$, and any $s=1,2,\dots$, we can determine the functions
\begin{align*}
((q_1,i_1),(q_2,i_2), s;\tau_0)&\mapsto E_A(\p_t^s (F^{1,q_1}_{i_1}- F^{1,q_2}_{i_2});\tau_0),\\
((q_1,i_1),(q_2,i_2), s;\tau_0)&\mapsto E_A(\p_t^s F^{1,q_1}_{i_1}- F^{1,q_2}_{i_2};\tau_0).
\end{align*}
\end{lem}

Our next goal is to show that the knowledge of the energy functions $\mathcal{E}^{(N)}_{V,A}$ allows us to compute $L^2$ -- inner products between sources from the set $\mathcal{F}_{V; U_p,I_q}$ and the time derivatives of waves corresponding to such sources. To that end we shall need the following observation. 

\begin{lem}
\label{lem_energy_F_1-F_2_2}  

Given the energy functions $\mathcal{E}^{(N)}_{V,A}$, $N=1,2,\dots$, for any $(p_1,q_1,i_1)\in \N\times\N\times\N$, $(p_2,q_2,i_2)\in \N\times \N\times\N$ such that $\overline{I_{q_1}}<\overline{I_{q_2}}$,  $T\ge 0$ and any $\tau_0\in \R$ such that $\tau_0\in I_{q_2}$, we can determine the functions
\[
((p_1,q_1,i_1), (p_2,q_2,i_2),T;\tau_0)\mapsto E_A(\tau_T F_{i_1}^{p_1,q_1}- F_{i_2}^{p_2,q_2},\tau_0).
\]

\end{lem}

\begin{proof}
Let $q_0\in \N$ be such that $\overline{I_{q_1}}<\overline{I_{q_0}}<\overline{I_{q_2}}$.  Then by Lemma \ref{lem_sequences_uniqueness}, using the energy functions $\mathcal{E}^{(N)}_{V,A}$, $N=1,2,\dots$, we can find a sequences $(i_j)_{j=1}^\infty\in X$ such that 
\[
\lim_{j\to\infty} E_A(F_{i_j}^{1,q_0}+\tau_T F^{p_1,q_1}_{i_1},\tau_0)=0. 
\]
Hence,
\[
E_A(\tau_T F^{p_1,q_1}_{i_1}- F^{p_2,q_2}_{i_2},\tau_0)=\lim_{j\to \infty} E_A(-F_{i_j}^{1,q_0}-F^{p_2,q_2}_{i_2},\tau_0)=\lim_{j\to \infty} E_A(F_{i_j}^{1,q_0}+F^{p_2,q_2}_{i_2},\tau_0),
\]
which can be computed using $\mathcal{E}^{(N)}_{V,A}$, as the time supports of the sources $F^{p_2,q_2}_{i_2}$ and $F_{i_j}^{1,q_0}$ are disjoint. This completes the proof. 
\end{proof}

In what follows, whenever convenient, we shall use the notation,
\[
u(t;F)=u^F(t). 
\]
We have the following result. 
\begin{lem}
\label{lem_measurements_derivative}

Given the energy functions $\mathcal{E}^{(N)}_{V,A}$, $N=1,2,\dots$, for any $(q_1,i_1)\in \N\times\N$, $(p_2,q_2,i_2)\in \N\times \N\times\N$ such that $\overline{I_{q_1}}<\overline{I_{q_2}}$,  and any $\tau_0\in \R$ such that $\tau_0\in I_{q_2}$, we can determine the functions
\[
((q_1,i_1),(p_2,q_2,i_2); \tau_0)\mapsto (F^{p_2,q_2}_{i_2}(\tau_0), \partial_t u(\tau_0; F^{1,q_1}_{i_1}))_{L^2(M,V)}.
\]

\end{lem}

\begin{proof} Let $F\in L^1(\R,L^2(M,V))$. Then since $A$ is self-adjoint operator, we get, taking the time derivative in the sense of distributions, 
\begin{equation}
\label{eq_conservation_energy}
\begin{aligned}
\partial_t E_A(F,t)&=\int_M(\langle \partial^2_t u^F(x,t),\partial_t u^F(x,t)\rangle_x+
\langle A u^F(x,t),\partial_t u^F(x,t)\rangle_x)d\mu(x)\\
&=\int_M\langle A u^F(x,t)+\partial^2_t u^F(x,t),\partial_t u^F(x,t)\rangle_xd\mu(x)\\
&=\int_M\langle F(x,t),\partial_t u^F(x,t)\rangle_xd\mu(x).
\end{aligned}
\end{equation}
Thus, we obtain that
\begin{align*}
&\partial_t[E_A(F^{1,q_1}_{i_1}+F^{p_2,q_2}_{i_2},t)-E_A(F^{1,q_1}_{i_1}-F^{p_2,q_2}_{i_2},t)]\\
&=2\int_M(\langle F^{1,q_1}_{i_1}(t),\partial_t u(t; F^{p_2,q_2}_{i_2})\rangle_x+\langle F^{p_2,q_2}_{i_2}(t),\partial_t u(t; F^{1,q_1}_{i_1})\rangle_x)d\mu(x).
\end{align*}
As $\overline{I_{q_1}}<\overline{I_{q_2}}$, we have 
\[
\supp(u(\cdot; F^{p_2,q_2}_{i_2}))\cap\supp(F^{1,q_1}_{i_1})=\emptyset.
\]
Hence,
\[
\partial_t[E_A(F^{1,q_1}_{i_1}+F^{p_2,q_2}_{i_2},t)-E_A(F^{1,q_1}_{i_1}-F^{p_2,q_2}_{i_2},t)]=
2 (F^{p_2,q_2}_{i_2}(t),\partial_t u(t; F^{1,q_1}_{i_1}))_{L^2(M,V)}.
\]
As for any $t\in I_{q_2}$, the energy $E_A(F^{1,q_1}_{i_1}+F^{p_2,q_2}_{i_2},t)-E_A(F^{1,q_1}_{i_1}-F^{p_2,q_2}_{i_2},t)$ can be computed using energy functions $\mathcal{E}^{(N)}_{V,A}$ and Lemma \ref{lem_energy_F_1-F_2_2},
 the claim follows. The proof is complete. 

\end{proof}

\begin{cor}
\label{cor_measurements_derivative}
Given the energy functions $\mathcal{E}^{(N)}_{V,A}$, $N=1,2,\dots$,  when $(q_1,i_1)\in \N\times\N$, $(p_2,q_2,i_2)\in \N\times \N\times\N$ such that $\overline{I_{q_1}}<\overline{I_{q_2}}$,  and any $\tau_0,T_0\in \R$ such that $\tau_0\in I_{q_2}$ and $\overline{I_{q_2}}<T_0$, we can determine the functions
\[
((q_1,i_1),(p_2,q_2,i_2); \tau_0,T_0)\mapsto (F^{p_2,q_2}_{i_2}(\tau_0), \partial_t u(T_0; F^{1,q_1}_{i_1}))_{L^2(M,V)}.
\]

\end{cor}

\begin{proof}
The claim follows by applying  Lemma \ref{lem_measurements_derivative} with $\tau_{T_0-\tau_0}F^{1,q_1}_{i_1}$ instead of $F^{1,q_1}_{i_1}$ and the general  fact that
$u^{\tau_{T}F}(t)=\tau_{T}u^{F}(t)=u^{F}(t+T)$. 
\end{proof}

The following consequence of Corollary \ref{cor_measurements_derivative} will be useful when recovering the operator $A$. 
\begin{cor}
\label{cor_measurements_deriv_s}

Given the energy functions $\mathcal{E}^{(N)}_{V,A}$, $N=1,2,\dots$,  for any $s=1,2,\dots$,  $(q_1,i_1)\in \N\times\N$, $(p_2,q_2,i_2)\in \N\times \N\times\N$ such that $\overline{I_{q_1}}<\overline{I_{q_2}}$,  and any $\tau_0,T_0\in \R$ such that $\tau_0\in I_{q_2}$ and $\overline{I_{q_2}}<T_0$, we can determine the functions
\[
((q_1,i_1),(p_2,q_2,i_2), s; \tau_0,T_0)\mapsto (F^{p_2,q_2}_{i_2}(\tau_0), \partial_t^s u(T_0; F^{1,q_1}_{i_1}))_{L^2(M,V)}.
\]

\end{cor}

When recovering the inner product in the fibers $\pi^{-1}(x)$ of the vector bundle $V$, we shall need the following observation. 

\begin{cor}
\label{cor_inner_f}

Given the energy functions $\mathcal{E}^{(N)}_{V,A}$, $N=1,2,\dots$,  for any $s=1,2,\dots$,  $(q_1,i_1)\in \N\times\N$, $(p_2,q_2,i_2)\in \N\times \N\times\N$ such that $\overline{I_{q_1}}<\overline{I_{q_2}}$,  and any $\tau_0,T_0\in \R$ such that $\tau_0\in I_{q_2}$ and $\overline{I_{q_2}}<T_0$, we can determine the functions
\[
((q_1,i_1),(p_2,q_2,i_2), s; \tau_0,T_0)\mapsto (F^{p_2,q_2}_{i_2}(\tau_0), A u(T_0; F^{1,q_1}_{i_1}))_{L^2(M,V)}.
\]

\end{cor}

Corollary \ref{cor_inner_f} follows  from Corollary \ref{cor_measurements_deriv_s} and  the fact that $F^{1,q_1}_{i_1}(T_0)=0$, and therefore, $A u(T_0; F^{1,q_1}_{i_1})=-\p_t^2 u(T_0; F^{1,q_1}_{i_1})$.

\subsection{Generalized sources} 

The purpose of this subsection is to introduce the space of generalized sources, which will allow us to view Sobolev spaces $H^s(M,V)$ as sets of waves, corresponding to such generalized sources. 
We note that the construction of the space of generalized sources is well known in control theory for PDE, see \cite{Las_Trig_2000, KL-Dirac,  Russell_1978}.

Let $q_0\in\N$. Then   $I_{q_0}\in \mathcal{B}_{\R}$ and we recall that 
\[
\mathcal{F}_{V; M,I_{q_0}}=\{F^{1,q_0}_i: i=1,2,\dots\}\subset C_0^\infty(\overline{I_{q_0}}, C^\infty(M,V)), 
\] 
where the inclusion is dense. 
Denote as before by
\[
X=\{(i_j)_{j=1}^\infty:i_j\in \N\}
\]
the set of all sequences of integers. 

For any $s=0,1,2,\dots$ and $\tau_0\in \R$ such that $\overline{I_{q_0}}<\tau_0$,  let us introduce the following space of sequences of sources,
  \[
 \mathcal{F}^s_{q_0,\tau_0}=\{(F^{1,q_0}_{i_j})_{j=1}^\infty
 : F^{1,q_0}_{i_j}\in \mathcal{F}_{V; M,I_{q_0}},\ \lim_{j,k\to\infty} E_A(\partial_t^s(F^{1,q_0}_{i_j}-F^{1,q_0}_{i_k});\tau_0)=0\},
 \]
 and the  corresponding set of indices,
 \[
 Z^s_{q_0,\tau_0}=\{(i_j)_{j=1}^\infty\in X: (F^{1,q_0}_{i_j})_{j=1}^\infty\in \mathcal{F}^s_{q_0,\tau_0} \}. 
 \]

 \begin{rem}
Given the energy functions $\mathcal{E}^{(N)}_{V,A}$, $N=1,2,\dots$, the sets $Z^s_{q_0,\tau_0}$ can be constructed. Indeed, by Lemma \ref{lem_sec_3_1}, for any sequence of indices $(i_j)_{j=1}^\infty\in X$,  we can compute the energy $E_A(\partial_t^s(F^{1,q_0}_{i_j}-F^{1,q_0}_{i_k});\tau_0)$ and thus, we can check whether the given sequence of indices $(i_j)_{j=1}^\infty$ belongs to $Z^s_{q_0,\tau_0}$. 

\end{rem}

It follows from  \eqref{eq_energy_inner} that for any  $(F^{1,q_0}_{i_j})_{j=1}^\infty\in \mathcal{F}^s_{q_0,\tau_0}$,  the sequences $(u(\tau_0; F^{1,q_0}_{i_j}))_{j=1}^\infty$ and  $(\partial_t u(\tau_0; F^{1,q_0}_{i_j}))_{j=1}^\infty$ are  Cauchy sequences in $H^{s+1}(M,V)$ and $H^{s}(M,V)$, respectively. Since the space $ H^{s}(M,V)$ is complete,  we can define the operator
\begin{align*}
W:\mathcal{F}^s_{q_0,\tau_0}&\to H^{s+1}(M,V)\times H^{s}(M,V), \\
W((F^{1,q_0}_{i_j})_{j=1}^\infty)&=\bigg(\lim_{j\to\infty} u(\tau_0; F^{1,q_0}_{i_j}),\lim_{j\to\infty} \partial_t u(\tau_0; F^{1,q_0}_{i_j})\bigg).
\end{align*}
We also  define a semi-norm on the space $\mathcal{F}^s_{q_0,\tau_0}$ of sequences of  sources, given by
\begin{equation}
\label{eq_semi-norm}
\|(F^{1,q_0}_{i_j})_{j=1}^\infty\|_{\mathcal{F}^s_{q_0,\tau_0}}=\lim_{j\to \infty}\sqrt{ E_A(\partial_t^sF^{1,q_0}_{i_j},\tau_0)}.
\end{equation}

\begin{rem}
Given the energy functions $\mathcal{E}^{(N)}_{V,A}$, $N=1,2,\dots$, Lemma \ref{lem_deriv} allows us to compute the semi-norm \eqref{eq_semi-norm} for any sequence of indices $(i_j)_{j=1}^\infty\in Z^s_{q_0,\tau_0}$.

\end{rem}

We say that $(F^{1,q_0}_{i^1_j})_{j=1}^\infty\in \mathcal{F}^s_{q_0,\tau_0}$ is equivalent to $(F^{1,q_0}_{i^2_j})_{j=1}^\infty\in \mathcal{F}^s_{q_0,\tau_0}$ and write
$(F^{1,q_0}_{i^1_j})_{j=1}^\infty\sim (F^{1,q_0}_{i^2_j})_{j=1}^\infty$
if
\begin{align*}
  & \lim_{j\to\infty} u(\tau_0; F^{1,q_0}_{i^1_j})=\lim_{j\to\infty} u(\tau_0; F^{1,q_0}_{i^2_j}) \quad\text{in}\quad H^{s+1}(M,V),\\
  & \lim_{j\to\infty} \partial_t u(\tau_0; F^{1,q_0}_{i^1_j})=\lim_{j\to\infty} \partial_t u(\tau_0; F^{1,q_0}_{i^2_j})\quad \text{in} \quad H^{s}(M,V).
  \end{align*}
This is equivalent to that fact that $\|(F^{1,q_0}_{i^1_j}- F^{1,q_0}_{i^2_j})_{j=1}^\infty\|_{\mathcal{F}^s_{q_0,\tau_0}}=0$.
Further, we define the space $\mathcal{F}^s_{q_0,\tau_0}/\sim$ and \eqref{eq_semi-norm} becomes a norm on this space.  The operator $W$, extended to  the linear  space $\mathcal{F}^s_{q_0,\tau_0}/\sim$,  is then continuous. 
Elements of the space $\mathcal{F}^s_{q_0,\tau_0}/\sim$ will be denoted by $[(F^{1,q_0}_{i_j})_{j=1}^\infty]$, $(i_j)_{j=1}^\infty\in  Z^s_{q_0,\tau_0}$.

Finally, we complete $\mathcal{F}^s_{q_0,\tau_0}/\sim$ with respect to the norm \eqref{eq_semi-norm} and denote this completion by  $\overline{\mathcal{F}}^s_{q_0,\tau_0}$.
The space $\overline{\mathcal{F}}^s_{q_0,\tau_0}$ is called the space of \emph{generalized sources}. A typical element of $\overline{\mathcal{F}}^s_{q_0,\tau_0}$ is the set of equivalence classes of  Cauchy  sequences $([(F^{1,q_0}_{i^k_j})_{j=1}^\infty])_{k=1}^\infty$, $(i_j^k)_{j=1}^\infty\in Z^s_{q_0,\tau_0}$ for all $k=1,2,\dots$, i.e., 
\[
\|[(F^{1,q_0}_{i^k_j})_{j=1}^\infty]-[(F^{1,q_0}_{i^l_j})_{j=1}^\infty]\|_{\mathcal{F}^s_{q_0,\tau_0}}\to 0,\quad \textrm{as}\ k,l\to \infty.
\]

\begin{rem}
Given the energy functions $\mathcal{E}^{(N)}_{V,A}$, $N=1,2,\dots$, we can construct the space of the "generalized indices", which corresponds to the space of generalized sources $\overline{\mathcal{F}}^s_{q_0,\tau_0}$, in the identification \eqref{eq_iden_A}.  
For simplicity of notation, we shall refrain from introducing this space explicitly. 

\end{rem}

For any $\hat F\in \overline{\mathcal{F}}^s_{q_0,\tau_0}$, $\hat F=[([(F^{1,q_0}_{i^k_j})_{j=1}^\infty])_{k=1}^\infty]$, $F^{1,q_0}_{i^k_j}\in \mathcal{F}_{V;M,I_{q_0}}$,  we define
\begin{equation}
\label{eq_sec_5_1}
\begin{aligned}
u^{\hat{F}}(\tau_0)=\lim_{k\to\infty}\lim_{j\to\infty} u(\tau_0; F^{1,q_0}_{i^k_j}),\quad
\partial_tu^{\hat{F}}(\tau_0)=\lim_{k\to\infty}\lim_{j\to\infty} \partial_t u(\tau_0; F^{1,q_0}_{i^k_j}).
\end{aligned}
\end{equation}
 We extend continuously  the wave operator $W$ to the space of generalized sources by 
\begin{equation}
\label{eq_sec_4_2}
\begin{aligned}
W: \overline{\mathcal{F}}^s_{q_0,\tau_0}&\to H^{s+1}(M,V)\times H^{s}(M,V),\\
W(\hat{F})&=(u^{\hat{F}}(\tau_0),\partial_tu^{\hat{F}}(\tau_0)).
\end{aligned}
\end{equation}

\begin{lem} 
\label{lem_wave_op-new}

The operator $W$, given by \eqref{eq_sec_4_2}, is bijective. 
\end{lem}

\begin{proof}

The injectivity of $W$ follows from the definition of the space $\overline{\mathcal{F}}^s_{q_0,\tau_0}$. 

Let us prove that $W$ is surjective. 
First we show that for any $(a,b)\in C^\infty(M,V)\times C^\infty(M,V)$, there is
 $F\in C^\infty_0(\overline{I_{q_0}}, C^\infty (M, V))$  such that $u^F(\tau_0)=a$ and $\partial_t u^F(\tau_0)=b$,  where $u^F$ is a solution to \eqref{eq_int_1}. Indeed, let $v$ be a solution to
\begin{align*}
&(\partial_t^2+A)v =0, \quad \text{in} \ M\times \mathbb{R}\\
&v|_{t=\tau_0}=a,\quad \partial_t v|_{t=\tau_0}=b.
\end{align*}
Writing $I_{q_0}=(t_0^-,t_0^+)$, we 
 let $\psi\in C^\infty(\mathbb{R},[0,1])$ be such that  $\psi(t)=0$ if $t\le t_0^-$ and $\psi(t)=1$ if $t\ge t_0^+$. Then $u^F(x,t)=v(x,t)\psi(t)$ is a solution to \eqref{eq_int_1} with  $F=2\partial_t v\partial_t \psi + v\partial_t^2\psi\in 
 C^\infty_0(\overline{I_{q_0}},C^\infty(M, V) )$. 
 Furthermore, $u^F(\tau_0)=a$ and $\partial_t u^F(\tau_0)=b$.

 Now let $(a,b)\in H^{s+1}(M,V)\times H^{s}(M,V)$. Since $C^\infty(M,V)$ is dense in  all Sobolev spaces, there are sequences $(a_k)_{k=1}^\infty$, $(b_k)_{k=1}^\infty$ with   $a_k,b_k\in C^\infty(M,V)$ such that
 \begin{equation}
 \label{eq_sec_4_3}
  a=\lim_{k\to\infty}a_k\quad \text{in}\ H^{s+1}(M,V),\quad b=\lim_{k\to\infty}b_k\quad \text{in}\ H^{s}(M,V).
  \end{equation}
 By the first part of the proof, there are sources $F_k\in C^\infty_0(\overline{I_{q_0}}, C^\infty (M, V) )$   such that $u^{F_k}(\tau_0)=a_k$ and $\partial_t u^{F_k}(\tau_0)=b_k$. Since $\mathcal{F}_{V; M,I_{q_0}}$ is dense in  $C^\infty_0(\overline{I_{q_0}},C^\infty(M, V))$, for any $F_k$ there is a sequence of sources $(F^{1,q_0}_{i^k_j})_{j=1}^\infty$,   $F^{1,q_0}_{i^k_j}\in\mathcal{F}_{V; M,I_{q_0}}$,  such that $\lim_{j\to\infty}F^{1,q_0}_{i^k_j}=F_k$ in $C^\infty_0(\overline{I_{q_0}},C^\infty(M, V))$.  
  Thus, \eqref{eq_estimates} yields that 
 \begin{equation}
 \label{eq_sec_4_4}
 \begin{aligned}
  &\lim_{j\to\infty} u(\tau_0; F^{1,q_0}_{i^k_j})=a_k\quad \textrm{in}Ê\quad  H^{s+1}(M,V), \\ 
 &\lim_{j\to\infty}\partial_t u(\tau_0; F^{1,q_0}_{i^k_j})=b_k\quad\textrm{in}\quad  H^s(M,V).
 \end{aligned}
 \end{equation} 
 We conclude from \eqref{eq_energy_inner}  and \eqref{eq_sec_4_4} that for each $k$, 
\[
 \lim_{j,l\to \infty} E_A(\p_t^s(F^{1,q_0}_{i^k_j}-F^{1,q_0}_{i^k_l});\tau_0)=0.
\] 
 Hence, for each $k$, $(i^k_j)_{j=1}^\infty\in Z^s_{q_0,\tau_0}$.
 
 Furthermore, 
 it follows from \eqref{eq_sec_4_3} and \eqref{eq_sec_4_4} that 
 \begin{align*}
  &\lim_{k\to \infty}\lim_{j\to\infty} u(\tau_0; F^{1,q_0}_{i^k_j})=a\quad \textrm{in}Ê\quad  H^{s+1}(M,V), \\ 
 &\lim_{k\to \infty}\lim_{j\to\infty}\partial_t u(\tau_0; F^{1,q_0}_{i^k_j})=b\quad\textrm{in}\quad  H^s(M,V).
 \end{align*}
 This together with  \eqref{eq_energy_inner}  implies that 
 \[
 \lim_{j\to\infty} E_A(\p_t^s(F^{1,q_0}_{i^k_j}-F^{1,q_0}_{i^l_j});\tau_0)\to 0,\quad\textrm{as}\quad k,l\to \infty,
 \]
 and therefore, 
  $[([(F^{1,q_0}_{i^k_j})_{j=1}^\infty])_{k=1}^\infty]\in \overline{\mathcal{F}}^s_{q_0,\tau_0}$. The proof is complete.

\end{proof}

We also define the space
\[
\overline{\mathcal{F}}^\infty_{q_0,\tau_0}=\bigcap_{s=0,1,2,\dots}\overline{\mathcal{F}}^s_{q_0,\tau_0}.
\]
It is clear that the map 
\begin{equation}
\label{eq_W-infty}
W^\infty:\overline{\mathcal{F}}^\infty_{q_0,\tau_0}\to C^\infty(M,V)\times C^\infty(M,V), 
\end{equation}
induced by $W$, 
 is bijective.

\subsection{Reconstruction of the vector bundle and the Riemannian structure on it}

In what follows we shall need to work with the space of distributions with values in the vector bundle $V$, 
\[
\mathcal{D}'(M,V)=(C^\infty(M,V\otimes \Omega))',
\]
where $\Omega$ is the density bundle over $M$, see \cite[Chapter 18]{hor_book_III}. 

In particular, we have the delta distribution $\delta_{x_0}\in \mathcal{D}'(M,\R)=(C^\infty(M,\Omega))'$ at $x_0\in M$, given by 
\[
\int_M \delta_{x_0}(x)\phi(x) d\mu(x)=  \delta_{x_0}(\phi d\mu)=\phi(x_0),\quad \phi\in C^\infty(M, \R).
\]

Furthermore, for any $\lambda(x_0)\in \pi^{-1}(x_0)$, we consider the distribution $\lambda(x_0)\delta_{x_0}\in \mathcal{D}'(M,V)$, given by
\[
(\lambda(x_0)\delta_{x_0}, \psi)_{L^2(M,V)}=\lambda(x_0)\delta_{x_0}(\psi d\mu) =\langle \lambda(x_0),\psi(x_0) \rangle_{x_0},\quad \psi\in C^\infty(M,V).
\]

The main point of the bundle reconstruction is to use the energy functions $\mathcal{E}^{(N)}_{V,A}$, $N=1,2,\dots$,  to find for any point $x_0\in M$, sequences of sources that converge to delta distributions $\lambda(x_0)\delta_{x_0}$, $\lambda(x_0)\in\pi^{-1}(x_0)$. 

In what follows, we set $s_0=n/2+1\in \mathbb{N}$, if $n$ is even, and $s_0=n/2+1/2\in \mathbb{N}$, if $n$ is odd, where $n$ is the dimension of the manifold $M$.   We have
\[
\delta_{x_0}\in H^{-s_0}(M,\R),\quad \partial^\alpha\delta_{x_0}\not\in  H^{-s_0}(M,\R), \quad |\alpha|\ge 1.
\]

Let us equip $M$ with  a smooth Riemannian metric, and let $B(x_0,r)$ be an open ball on $M$, centered at a point $x_0\in M$, with radius $r$, with respect to this metric.  
For any point $x_0\in M$, 
there is a sequence  $(U_{p_l})_{l=1}^\infty$, $(p_l)_{l=1}^\infty\in X$,  of the sets $U_{p_l}\in \mathcal{B}_M$ 
such that 
\begin{equation}
\label{eq_sec_5_2_00}
x_0\in U_{p_l}\subset B(x_0,1/l),\quad l=1,2,\dots.
\end{equation}
Let $q_0\in \N$ and recall that for each $p_l$, $l=1,2,\dots$,  the set of sources $\mathcal{F}_{V; U_{p_{l}},I_{q_0}}=\{F^{p_l,q_0}_i\in C^0(\overline{I_{q_0}},L^2(U_{p_l},V)):i=1,2,\dots\}$
is dense in $ L^2(I_{q_0}, L^2(U_{p_l},V))$.

Viewing the sources $F^{p_l,q_0}_{i}(t)$, $t\in I_{q_0}$, as elements of $H^{-s_0}(M,V)$, we define the set $\mathcal{L}^{(1)}_{x_0,q_0}$ as the set of all sequences $\big(p_l, i_l,t_l\big)_{l=1}^\infty$ such that $p_l\in \N$, $p_l$ satisfies \eqref{eq_sec_5_2_00},  $i_l\in \N$, $t_l\in I_{q_0}$, and for any $\varphi\in H^{s_0}(M,V)$, the limit 
\begin{equation}
\label{eq_sec_5_2}
\lim_{l\to\infty}(F^{p_l,q_0}_{i_l}(t_l),\varphi)_{L^2(M,V)}
\end{equation}
exists. This implies that the sequence $(F^{p_l,q_0}_{i_l}(t_l),\varphi)_{L^2(M,V)}$, $l=1,2,\dots$,  is bounded, for any $\varphi\in H^{s_0}(M,V)$, and therefore, by the Banach-Steinhaus theorem, it is uniformly bounded, i.e.
\[
|(F^{p_l,q_0}_{i_l}(t_l),\varphi)_{L^2(M,V)}|\le C \|\varphi\|_{H^{s_0}(M,V)}, 
\]
with $C>0$ independent of $\varphi$ and $l$.  
 Hence, the functional 
\[
G:H^{s_0}(M,V)\to \R, \quad G(\varphi)=\lim_{l\to\infty}(F^{p_l,q_0}_{i_l}(t_l),\varphi)_{L^2(M,V)},
\]  
is linear and bounded, and therefore $G\in H^{-s_0}(M,V)$. We have 
\[
\lim_{l\to \infty}F^{p_l,q_0}_{i_l}(t_l)=G
\] 
in the weak topology of $H^{-s_0}(M,V)$.  
Furthermore, $\supp(F^{p_l,q_0}_{i_l}(t_l))\subset \overline{U_{p_l}}\subset \overline{B(x_0,1/l)}$, and hence, $\supp(G)=\{x_0\}$.  Considering $G$ in a local trivialization near $x_0$,  it follows from 
\cite[Theorem 2.3.4]{Hor03} that 
$G$ is a finite linear combination of the delta distributions at $x_0$ and its derivatives with coefficients from $\pi^{-1}(x_0)$.  By the choice of $s_0$, the space $H^{-s_0}(M,\R)$ contains delta distributions while not their derivatives. Thus, we have $G=\lambda(x_0)\delta_{x_0}$ with some $\lambda(x_0)\in \pi^{-1}(x_0)$.

We obtain the following result. 

\begin{lem}
The sequence $\big(p_l, i_l,t_l\big)_{l=1}^\infty\in \mathcal{L}^{(1)}_{x_0,q_0}$ if and only if   
\[
\lim_{l\to\infty}F^{p_l,q_0}_{i_l}(t_l)=\lambda(x_0)\delta_{x_0},
\]
with some $\lambda(x_0)\in \pi^{-1}(x_0)$,  in the weak topology of $H^{-s_0}(M,V)$.

\end{lem}

\begin{lem}
\label{lem_non_empty_new}
We have   $\mathcal{L}^{(1)}_{x_0,q_0}\ne\emptyset$.
\end{lem}

\begin{proof}

Let $\lambda\in C^0(M,V)$ and for any $l=1,2,\dots$,  consider the following functions,
\[
G_l(x,t)=\lambda(x) \frac{1}{\text{vol}(U_{p_l})}\chi_{U_{p_l}}(x)\chi_{I_{q_0}}(t)\in L^2(I_{q_0}, L^2(U_{p_l},V)),
\]
where $\chi_{U_{p_l}}$ and $\chi_{I_{q_0}}$ are the characteristic functions of the sets $U_{p_l}$ and $I_{q_0}$, respectively, and
\[
\text{vol}(U_{p_l})=\int_{U_{p_l}} d\mu(x). 
\]
Let $t_l\in I_{q_0}$. Then for any $\varphi\in H^{s_0}(M,V)$, we get
\begin{equation}
\label{eq_sec_5_2_0}
\lim_{l\to \infty} (G_l(t_l),\varphi)_{L^2(M,V)}=\lim_{l\to \infty} \frac{1}{\text{vol}(U_{p_l})}\int_{U_{p_l}} \langle \lambda(x), \varphi(x)\rangle_x d\mu(x)= \langle \lambda(x_0), \varphi(x_0)\rangle_{x_0},
\end{equation}
since $s_0>n/2$ and by Sobolev's embedding theorem $H^{s_0}(M,V)\subset C^0(M,V)$.

As the set $\mathcal{F}_{V; U_{p_l},I_{q_0}}$ is dense in $L^2(I_{q_0}, L^2(U_{p_l}, V))$, for any $\varepsilon>0$, there is $i_l\in \N$ such that for the corresponding source $F^{p_l,q_0}_{i_l}\in \mathcal{F}_{V; U_{p_l},I_{q_0}}$, we have
\[
\|F^{p_l,q_0}_{i_l}-G_l\|_{L^2(I_{q_0}, L^2(U_{p_l}, V))}\le \varepsilon\sqrt{|I_{q_0}|}, 
\]
 where $|I_{q_0}|=\int_{I_{q_0}}dt$.   As the function $F^{p_l,q_0}_{i_l}$ is continuous in time, by the mean value theorem for integrals, there is $t_l\in I_{q_0}$ such that 
  \[
\|F^{p_l,q_0}_{i_l}(t_l)-G_l(t_l)\|_{L^2(U_{p_l}, V)}\le \varepsilon.  
\]
Therefore, for any $\varphi\in H^{s_0}(M,V)$, we have
\[
\lim_{l\to \infty}(F^{p_l,q_0}_{i_l}(t_l)-G_l(t_l),\varphi)_{L^2(M,V)}=0. 
\]
This together with \eqref{eq_sec_5_2_0} implies that 
\[
\lim_{l\to \infty} (F^{p_l,q_0}_{i_l}(t_l),\varphi)_{L^2(M,V)}=\langle \lambda(x_0), \varphi(x_0)\rangle_{x_0}.
\] 
Hence, the sequence $(p_l, i_l,t_l)_{l=1}^\infty\in \mathcal{L}^{(1)}_{x_0,q_0}$. The proof is complete. 

\end{proof}

In order to reconstruct the vector bundle $V$ we shall need the following determination result.

\begin{lem}
\label{lem_const_1_new}
Given the energy functions $\mathcal{E}^{(N)}_{V,A}$, $N=1,2,\dots$, we can construct the set 
$\mathcal{L}^{(1)}_{x_0,q_0}$. 
\end{lem}

\begin{proof} 
Let $q_1\in \N$ and $\tau_0\in \R$ be such that $\overline{I_{q_1}}<\overline{I_{q_0}}<\tau$. 
Then it follows from Lemma \ref{lem_wave_op-new} that 
\[
H^{s_0}(M,V)=\{\p_t u^{\hat F}(\tau_0): \hat F\in \overline{\mathcal{F}}^{s_0}_{q_1,\tau_0}\}, 
\]
and therefore, the condition \eqref{eq_sec_5_2} is equivalent to the fact that for all $\hat F\in \overline{\mathcal{F}}^{s_0}_{q_1,\tau_0}$, the limit
\begin{equation}
\label{eq_sec_5_3}
\lim_{l\to\infty}(F^{p_l,q_0}_{i_l}(t_l),\p_t u^{\hat F}(\tau_0))_{L^2(M,V)}
\end{equation}
 exists. 
 Corollary \ref{cor_measurements_derivative} together with \eqref{eq_sec_5_1} implies that using the energy functions $\mathcal{E}^{(N)}_{V,A}$,  we can compute the inner products, 
\[
(F^{p_l,q_0}_{i_l}(t_l), \p_t u^{\hat F}(\tau_0))_{L^2(M,V)}
\]
for any $\hat F\in \overline{\mathcal{F}}^{s_0}_{q_1,\tau_0}$. Hence,  for each sequence $\big(p_l, i_l,t_l\big)_{l=1}^\infty$, we can check  whether the limit 
\eqref{eq_sec_5_3} exists  for any $\hat F\in \overline{\mathcal{F}}^{s_0}_{q_1,\tau_0}$. In such a way, we can construct the set  $\mathcal{L}^{(1)}_{x_0,q_0}$.  The proof is complete. 

\end{proof}

For $x_0\in M$, let $U_{\nu}\in \mathcal{B}_M$ be  small  and such that $x_0\in U_\nu$.  Then for any point $x\in U_{\nu}$, 
there is a sequence  $(U_{p_l(x)})_{l=1}^\infty$, $(p_l(x))_{l=1}^\infty\in X$,  of the sets $U_{p_l(x)}\in \mathcal{B}_M$ 
such that 
\begin{equation}
\label{eq_sec_6_1}
x\in U_{p_l(x)}\subset B(x,1/l),\quad l=1,2,\dots.
\end{equation}

For each such $p_l(x)$, $l=1,2,\dots$,  we consider the set of sources 
\[
\mathcal{F}_{V; U_{p_{l}(x)},I_{q_0}}=\{F^{p_l(x),q_0}_i\in C^0(\overline{I_{q_0}}, L^2(U_{p_l(x)},V)):i=1,2,\dots\},
\] 
which is dense in $L^2(I_{q_0}, L^2(U_{p_l(x)},V))$.

Let $\mathcal{L}^{(2)}_{U_{\nu}, q_0}$ be the set of all sequences of functions $\big(p_l(x),i_l(x), t_l(x)\big)_{l=1}^\infty$, $x\in U_{\nu}$, such that
\begin{itemize}
\item[1.] for any $x\in U_{\nu}$, $\big(p_l(x), i_l(x),t_l(x)\big)_{l=1}^\infty\in \mathcal{L}^{(1)}_{x,q_0}$, i.e. $p_l(x)\in \N$, $i_l(x)\in \N$, $t_l(x)\in I_{q_0}$ and  $\lim_{l\to\infty}F^{p_l(x),q_0}_{i_l(x)}(t_l(x))=\lambda(x)\delta_x$ in the weak topology of $H^{-s_0}(M,V)$.

\item[2.] $x\mapsto \lambda(x)$ is a $C^\infty$--smooth section in $U_{\nu}$.

\end{itemize}

The fact that $\mathcal{L}^{(2)}_{U_{\nu}, q_0}\ne\emptyset$  can be easily seen by the arguments, used in the proof of Lemma \ref{lem_non_empty_new}.

\begin{lem}
Given the energy functions $\mathcal{E}^{(N)}_{V,A}$, $N=1,2,\dots$, we can construct the set $\mathcal{L}^{(2)}_{U_{\nu}, q_0}$. 
\end{lem}

\begin{proof}

First by Lemma \ref{lem_const_1_new}, for any $x\in U_{\nu}$, we can construct the set $\mathcal{L}^{(1)}_{x,q_0}$.  Hence, for any $\big(p_l(x), i_l(x),t_l(x)\big)_{l=1}^\infty\in \mathcal{L}^{(1)}_{x,q_0}$, we get
\[
\lim_{l\to\infty}F^{p_l(x),q_0}_{i_l(x)}(t_l(x))=\lambda(x)\delta_x,
\]
with some $\lambda(x)\in \pi^{-1}(x)$, in the weak topology of $H^{-s_0}(M,V)$.   We have the local section, 
\[
\lambda: U_{\nu}\to V,\quad  x\mapsto \lambda(x).
\]  

We need to show that for any sequence $\big(p_l(x), i_l(x),t_l(x)\big)_{l=1}^\infty\in \mathcal{L}^{(1)}_{x,q_0}$, $x\in U_{\nu}$, using the energy functions $\mathcal{E}^{(N)}_{V,A}$, we can check whether $\lambda$ is $C^\infty$--smooth. 
It is clear that $\lambda$ is a $C^\infty$-smooth section in $U_{\nu}$ if and only if functions
\[
K_{\phi}:U_{\nu}\to \mathbb{R},\quad K(x)=\langle\lambda(x),\phi(x)\rangle_x
\]
are $C^\infty$--smooth for all $\phi\in C^\infty(M,V)$.

Let an interval $I_{q_1}\in\mathcal{B}_{\R}$ and $\tau_0\in\R$ be such that $\overline{I_{q_1}}<\overline{I_{q_0}}<\tau_0$.  Then 
\[
C^\infty(M,V)=\{\p_t u^{\hat F}(\tau_0):\hat F\in  \overline{\mathcal{F}}^\infty_{q_1,\tau_0}\}. 
\]
Hence, checking whether the functions $K_\phi$ are
$C^\infty$ smooth for all $\phi\in C^\infty(M,V)$ is equivalent to checking that the functions
\begin{align*}
\langle\lambda(x),\p_t u^{\hat F}(x,\tau_0)\rangle_{x}&=
(\lambda(x)\delta_{x},\p_t u^{\hat F}(\tau_0))_{L^2(U_{\nu},V)}\\
&=
\lim_{l\to\infty} ( F^{p_l(x),q_0}_{i_l(x)}(t_l(x)),\p_t u^{\hat F}(\tau_0))_{L^2(U_{\nu},V)}
\end{align*}
are $C^\infty$-smooth  for any generalized source $\hat F\in \overline{\mathcal{F}}^\infty_{q_1,\tau_0}$.  
Corollary \ref{cor_measurements_derivative} implies that the latter can be checked  using the energy functions $\mathcal{E}^{(N)}_{V,A}$. The proof is complete. 

\end{proof}

Notice that in the definition of the set $\mathcal{L}^{(2)}_{U_{\nu},q_0}$ the sets $U_{p_l(x)}\in \mathcal{B}_M$ such that 
\eqref{eq_sec_6_1} holds are fixed.  This means that every sequence of functions from $\mathcal{L}^{(2)}_{U_{\nu},q_0}$ has the first element  given by $p_l(x)$, satisfying \eqref{eq_sec_6_1}.

For $d=1,2,\dots$, 
let  $\mathcal{L}^{(3,d)}_{U_{\nu},q_0}$ be  the set consisting of collections of $d$ sequences of functions,
\[
\big(p_l(x), i_l^1(x), t_l^1(x)\big)_{l=1}^\infty, \dots, \big(p_l(x), i_l^d(x), t_l^d(x)\big)_{l=1}^\infty,\quad  x\in U_{\nu},
\]
 such that
\begin{itemize}
\item [1.] $\big(p_l(x), i_l^k(x), t_l^k(x)\big)_{l=1}^\infty\in \mathcal{L}^{(2)}_{U_{\nu}, q_0}$, $k=1,\dots,d$, i.e.
\[
\lim_{l\to\infty}F^{p_l(x),q_0}_{i^k_l(x)}(t^k_l(x))=\lambda^k(x)\delta_x, 
\]
in the weak topology of $H^{-s_0}(M,V)$, with a $C^\infty$--smooth section $\lambda^k$, $k=1,\dots, d$;
\item [2.] $\lambda^1(x),\dots,\lambda^d(x)\in\pi^{-1}(x)$ are linearly independent for each $x\in U_\nu$;

\end{itemize}

\begin{lem}
Given the energy functions $\mathcal{E}^{(N)}_{V,A}$, $N=1,2,\dots$, for each $d=1,2,\dots$, we can construct the set $\mathcal{L}^{(3,d)}_{U_{\nu},q_0}$ or show that it is empty. 

\end{lem}

\begin{proof}
Let $d=1,2,\dots$ be fixed. Then we need to show that 
for each collection of the $d$ sequences of functions 
$\big(p_l(x), i_l^1(x), t_l^1(x)\big)_{l=1}^\infty, \dots, \big(p_l(x), i_l^d(x), t_l^d(x)\big)_{l=1}^\infty\in \mathcal{L}^{(2)}_{U_{\nu}, q_0}$, using the energy functions, we can determine  whether the corresponding vectors $\lambda^1(x),\dots,\lambda^d(x)\in\pi^{-1}(x)$ are linearly independent  at $x\in U_\nu$. Indeed, 
for each $x\in U_\nu$, it is obvious that
$\lambda^1(x),\dots,\lambda^d(x)\in\pi^{-1}(x)$ are linearly independent if and only if the functionals
\[
K_{\lambda^k}:\pi^{-1}(x)\to\mathbb{R},\quad K_{\lambda^k}(p(x))=\langle\lambda^k(x),p(x)\rangle_{x},\quad k=1,2,\dots,d,
\]
are linearly independent.
Let an interval $I_{q_1}\in\mathcal{B}_{\R}$ and $\tau_0\in \R$ be such that $\overline{I_{q_1}}<\overline{I_{q_0}}<\tau_0$. Then by the construction of the generalized sources,
\begin{align*}
\pi^{-1}(x)&=\{p(x):p:M\to V\ \text{is a }C^\infty \text{--smooth section}\}\\
&=\{\p_t u^{\hat F}(x,\tau_0):\hat F\in\overline{\mathcal{F}}^\infty_{q_1,\tau_0}\}.
\end{align*}
Now the linear independence of $K_{\lambda^k}$, $k=1,\dots,d$, is equivalent to the linear independence of the functionals
\begin{align*}
K_k:\overline{\mathcal{F}}^\infty_{q_1,\tau_0}\to\mathbb{R}, K_k(\hat F)&=\langle\lambda^k(x),\p_t u^{\hat F}(x,\tau_0)\rangle_{x}
=(\lambda^k(x)\delta_{x},\p_t u^{\hat F}(\tau_0))_{L^2(M,V)}\\
&=\lim_{l\to\infty}(F^{p_l(x),q_0}_{i_l^k(x)}(t_l^k(x)),\p_tu^{\hat F}(\tau_0))_{L^2(M,V)},\ k=1,\dots,d,
\end{align*}
which in turn can be checked thanks to 
Corollary \ref{cor_measurements_derivative}.  The proof is complete.

\end{proof}

Let $U_\nu\in \mathcal{B}_M$, $x_0\in U_\nu$, be such that there is  a number $d\in\N$ for which $\mathcal{L}^{(3,d)}_{U_{\nu},q_0}\ne\emptyset$.  The existence of such $U_\nu$ and $d$ follows from the arguments similar to those in the proof of  Lemma \ref{lem_non_empty_new}.  
Let now $d_\nu$ be the maximum number among all $d$ such that $\mathcal{L}^{(3,d)}_{U_{\nu},q_0}\ne\emptyset$. 
We take $U_{\mu}$  such that for any $U_\nu\subset U_{\mu}$, $x_0\in U_\nu\in \mathcal{B}_M$, $d_{\nu}=d_{\mu}:=d_0$. 
In such a way we obtain the rank $d_0$ the vector bundle $V$, which we have to reconstruct. 

Taking a finite open cover of $M$, consisting of sets of the form $U_\mu$, for each such set, we have obtained the sequences of functions $\big(p_l(x), i_l^k(x), t_l^k(x)\big)_{l=1}^\infty$, $k=1,\dots,d_0$, such that 
\[
\lim_{l\to\infty}F^{p_l(x),q_0}_{i^k_l(x)}(t^k_l(x))=\lambda^k_\mu(x)\delta_x, 
\]
in the weak topology of $H^{-s_0}(M,V)$, with $\lambda^1_\mu, \dots,\lambda^{d_0}_\mu$ forming a basis for $V$ over $U_\mu$.  Here $F^{p_l(x),q_0}_{i^k_l(x)}(t^k_l(x))\in \mathcal{F}_{V; U_{p_l(x)},I_{q_0}}$, and $t^k_l(x)\in I_{q_0}$.

Our next step is to determine the inner product in the fibers $\pi^{-1}(x)$, $x\in M$, of $V$. 

\begin{lem}
\label{lem_inner_new}
Given the energy functions $\mathcal{E}^{(N)}_{V,A}$, $N=1,2,\dots$, we can determine 
the inner products $\langle \lambda^1(x), \lambda^2(x)\rangle_x$ for any $\lambda^1(x),\lambda^2(x)\in \pi^{-1}(x)$, $x\in M$. 

\end{lem}

\begin{proof}

Let $I_{q_1}\in \mathcal{B}_{\R}$ and $\tau_0\in\R$ be such that $\overline{I_{q_1}}<\overline{I_{q_0}}<\tau_0$. 
Consider the set
\[
\mathcal{S}=\{\hat{F}\in \overline{\mathcal{F}}^\infty_{q_1,\tau_0}: u^{\hat F}(x,\tau_0)=0 \quad \text{for all} \ x\in M\}.
\]
We have $\mathcal{S}\ne\{0\}$, since the wave operator  $W^\infty$, introduced in \eqref{eq_W-infty},  is bijective.

Given the energy functions $\mathcal{E}^{(N)}_{V,A}$, $N=1,2,\dots$,  the set
$\mathcal{S}$ can be determined. Indeed, let $\lambda^1_{\mu},\dots,\lambda^{d_0}_{\mu}$ be a basis for $V$ over $U_\mu$. Then by  
Corollary \ref{cor_inner_f}, given the energy functions, for any $\hat{F}\in \overline{\mathcal{F}}^\infty_{q_1,\tau_0}$, we can check whether 
\[
\langle Au^{\hat F}(x,\tau_0),\lambda^k_\mu(x)\rangle_x=\lim_{l\to\infty}(Au^{\hat F}(x,\tau_0), F^{p_l(x),q_0}_{i^k_l(x)}(t^k_l(x)))_{L^2(M,V)}=0,
\]
for $k=1,\dots, d_0$ and for any $x\in U_\mu$. Doing this for any $U_\mu$ from the open cover of $M$, we can check whether $Au^{\hat F}(x,\tau_0)=0$ for all $x\in M$. As the operator $A$ is positive, the fact that $Au^{\hat F}(x,\tau_0)=0$ is equivalent to the fact that $u^{\hat F}(x,\tau_0)=0$.

Let
 $x_0\in M$ be an arbitrary point and let $U_\mu$ be a set from the open cover of $M$ such that $x_0\in U_\mu$. In order to recover the inner product in the fiber $\pi^{-1}(x_0)$ we shall use the fact that 
 \[
 \pi^{-1}(x_0)=\{\lambda(x_0):\lambda \textrm{ is a } C^\infty\textrm{ smooth section}\}=\{\p_t u^{\hat F}(x_0,\tau_0): \hat F\in \mathcal{S}\}. 
 \]
 We have
 \begin{equation}
\label{eq_inner}
\langle \partial_t u^{\hat F}(x_0,\tau_0),\partial_tu^{\hat F}(x_0,\tau_0)\rangle_{ x_0}^2=\lim_{r\to 0}\frac{\|\partial_t u^{\hat F}(\tau_0)\|^2_{L^2(B(x_0,r),V)}}{\text{vol}(B(x_0,r))},
\end{equation}
where $B(x_0,r)\subset U_\mu$.  

As the manifold $M$ and the density $d\mu$ are known, we can find $\text{vol}(B(x_0,r))$.
Let us explain how to determine the $L^2$-norm  of the wave $\partial_t u^{\hat F}(\tau_0)$ in the ball $B(x_0,r)$.  
First of all,  since $\hat F\in \mathcal{S}$, by the definition of the energy, we can find the  $L^2$-norm of the wave $\partial_t u^{\hat F}(\tau_0)$ on the whole manifold $M$,
\[
\|\partial_t u^{\hat F}(\tau_0)\|_{L^2(M,V)}^2=2E_A (\hat F,\tau_0).
\]
Let
\[
K=K(\hat F,x_0,r)=\{\hat H\in\mathcal{S}:\partial_t u^{\hat H}(x,\tau_0)=\partial_t u^{\hat F}(x,\tau_0)\quad \text{for all } x\in B( x_0,r)\}.
\]

Given the energy functions $\mathcal{E}^{(N)}_{V,A}$, $N=1,2,\dots$,  the set $K$ can be determined. Indeed let $\lambda^1_\mu,\dots,\lambda^{d_0}_\mu$ be a basis for $V$ over $U_{\mu}$.  Then $\hat H\in K$ if and only if
\[
\langle\partial_t u^{ \hat F- \hat H}(x,\tau_0),\lambda^k_\mu(x)\rangle_x=0,\quad \text{for all}\ x\in B(x_0,r),\ k=1,\dots,d_0.
\]
By Corollary \ref{cor_measurements_derivative} for any $\hat H\in\mathcal{S}$,  the above conditions can be verified using the energy functions.

Hence, we can compute 
\[
\|\partial_t u^{\hat F}(\tau_0)\|^2_{L^2(B(x_0,r),V)}=\inf_{\hat{H}\in K}\|\partial_t u^{\hat H}(\tau_0)\|_{L^2(M,V)}^2,
\]
and therefore, using \eqref{eq_inner}, for any $\hat F\in\mathcal{S}$,  we can determine the inner product 
$\langle \partial_t u^{\hat F}(x_0,\tau_0),\partial_tu^{\hat F}(x_0,\tau_0)\rangle_{ x_0}$. 

Furthermore, using the polarization formula for a real vector bundle, for any $\hat F,\hat H\in\mathcal{S}$, we get
\[
\langle\partial_t u^{\hat F}(x_0,\tau_0) ,\partial_t u^{\hat H}(x_0,\tau_0)\rangle_{x_0}=\frac{1}{4}(
 \|\partial_t u^{\hat F+\hat H}(x_0,\tau_0) \|_{ x_0}^2- \|\partial_t u^{\hat F-\hat H}(x_0,\tau_0)  \|_{x_0}^2 ).
\]
The proof is complete. 

\end{proof}

Our next step is to reconstruct local trivializations of the vector bundle $V$. 

\begin{lem}
\label{thm_geometry2}
Given the energy functions $\mathcal{E}^{(N)}_{V,A}$, $N=1,2,\dots$, we can reconstruct local trivializations $\phi_{\mu}:\pi^{-1}(U_\mu)\to U_\mu\times\mathbb{R}^{d_0}$ of the vector bundle $V$ and
the $GL(d,\R)$-cocycle $\{t_{\mu\nu}\}$, $t_{\mu\nu}:U_\mu\cap U_\nu\to \text{GL}(d_0,\mathbb{R})$ between any two local trivializations.
\end{lem}

\begin{proof}

Let $U_\mu$ be  a set from the open cover of $M$, constructed above, and let $\lambda^1_{\mu},\dots,\lambda^{d_0}_{\mu}$ be a basis for $V$ over $U_\mu$.  Let $I_{q_1}\in \mathcal{B}_{\R}$ and $\tau_0\in\R$ be such that $\overline{I_{q_1}}<\overline{I_{q_0}}<\tau_0$.
Then for any $x\in U_\mu$, 
\[
\pi^{-1}(x)=\{\p_t u^{\hat F}(x,\tau_0):\hat F\in \overline{\mathcal{F}}^\infty_{q_1,\tau_0}\}. 
\]
We have 
\[
\p_t u^{\hat F}(x,\tau_0)=\sum_{k=1}^{d_0}\alpha^k_\mu(x)\lambda_\mu^k(x),\quad x\in U_\mu. 
\]
Given the energy functions, we can determine the coefficients $\alpha^k_\mu(x)$, $x\in U_\mu$, $k=1,\dots,d_0$, uniquely. Indeed, we have the following system for $\alpha^k_\mu$, 
\begin{equation}
\label{eq_sec_6_2}
\langle \p_t u^{\hat F}(x,\tau_0), \lambda^j_\mu(x)\rangle_x=\sum_{k=1}^{d_0}\alpha^k_\mu(x)\langle \lambda_\mu^k(x), \lambda^j_\mu(x)\rangle_x,\quad x\in U_\mu, \quad j=1,\dots,d_0. 
\end{equation}
By Corollary \ref{cor_measurements_derivative} and Lemma \ref{lem_inner_new}, the left hand side of the system \eqref{eq_sec_6_2} and the inner products $\langle \lambda_\mu^k(x), \lambda^j_\mu(x)\rangle_x$  can be computed using the energy functions.  As  $\lambda^1_{\mu},\dots,\lambda^{d_0}_{\mu}$ is a basis for $V$ over $U_\mu$, the system \eqref{eq_sec_6_2}  is uniquely solvable.

Thus, we can define a local trivialization $\phi_\mu:\pi^{-1}( U_\mu)\to U_\mu\times\mathbb{R}^{d_0}$ of the vector bundle $V$ as follows: 
\[
\phi_\mu(\p_t u^{\hat F}(x,\tau_0))=(x,\alpha^1_\mu(x),\dots,\alpha^{d_0}_\mu(x)), \quad x\in U_\mu, \quad \hat F\in\overline{\mathcal{F}}^\infty_{q_1,\tau_0}.
\]

 Let $U_\mu\cap U_\nu\ne\emptyset$ and  let $\lambda^1_\mu,\dots,\lambda^{d_0}_\mu$  and $\lambda^1_\nu,\dots,\lambda^{d_0}_\nu$ be bases for $V$ over $U_\mu$ and $U_\nu$, respectively. Then for any $x\in U_\mu\cap U_\nu$, we have
\[
u^{\hat F}(x,\tau_0)=\sum_{k=1}^{d_0}\alpha^k_\mu(x)\lambda_\mu^k(x)=\sum_{k=1}^{d_0}\alpha^k_\nu(x)\lambda_\nu^k(x).
\]
Therefore, one can find a matrix $G(x)\in \textrm{GL}(d_0,\mathbb{R})$ such that
\[
(\alpha_\mu^1(x),\dots, \alpha_\mu^{d_0}(x))=(\alpha_\nu^1(x),\dots, \alpha_\nu^{d_0}(x))G(x),
\]
and hence,
the $GL(d_0,\R)$-cocycle $t_{\mu\nu}(x)=G(x)$. The proof is complete.

\end{proof}

Finally, thanks to Theorem  \ref{thm_geometry1}, the vector bundle $V$ can be determined up to an isometry.

\subsection{Reconstruction of the operator $A$}

In order to determine the operator $A$  it is enough to find its representation in an arbitrary local trivialization $\phi_{\mu}:\pi^{-1}(U_\mu)\to U_\mu\times \mathbb{R}^{d_0}$. Recall that
\[
C^\infty(U_\mu,\R^{d_0})=\{\p_t u^{\hat F}(x,\tau_0)|_{U_\mu}:\hat F\in \mathcal{F}^\infty_{q_1,\tau_0}\}. 
\]
 Using the wave equation, we get
\[
A\p_t u^{\hat F}(x,\tau_0)=-\partial_t^3 u^{\hat F}(x,\tau_0),\quad x\in M,
\]
as $\overline{I_{q_1}}<\tau_0$. Thus, by Corollary \ref{cor_measurements_deriv_s},  we can find the representations of $\p_t u^{\hat F}(x,\tau_0)$ and $\partial_t^3 u^{\hat F}(x,\tau_0)$ in the local trivialization $\phi_\mu$, and therefore,  the graph of the operator $A$ in the local trivialization $\phi_\mu$, 
\[
\{(\p_t u^{\hat F}(\tau_0)|_{U_\mu}, -u^{\partial_t^3\hat F}(x,\tau_0)|_{U_\mu}):\hat F\in \overline{\mathcal{F}}^\infty_{q_1,\tau_0}\}.
\]

The proof of Theorem \ref{thm_main} is complete.

\section{Generation of the data for the inverse problem. Random sources}
\label{sec_random_sources}

\def\expec{{\mathbb E \,}}
\def\prob{{\mathbb P \,}}

In Theorem \ref{thm_main} we assume that for any  interval $I\in \mathcal{B}_{\R}$ and  any  set $U\in\mathcal{B}_M$,  there is a countable set of sources $\mathcal{F}_{V;U,I}\subset C^0(\overline{I},L^2(U,V))$, which is dense in $L^2(I,L^2(U,V))$,  and a countable set $\mathcal{F}_{V; M,I}\subset C_0^\infty(\overline{I},C^\infty(M,V))$, which is dense in  $C_0^\infty(\overline{I},C^\infty(M,V))$.

The purpose of this section is to show that this assumption is generic in the sense that the sets  $\mathcal{F}_{V;U,I}$ and $\mathcal{F}_{V; M,I}$  can be almost surely generated by taking some sequences of realizations of suitable independent identically distributed Gaussian random variables.

We shall start by recalling some basic notions of probability theory, following \cite{Bog1998, Hairer}.  Let $(\Omega,\Sigma,\mathbb{P})$ be a complete probability space, and let $\mathcal{H}$ be a real separable Hilbert space with the inner product $(\cdot,\cdot)_{\mathcal{H}}$.  We shall identify the dual of $\mathcal{H}$ with $\mathcal{H}$, using the Riesz representation theorem.

A measurable map $X:(\Omega,\Sigma)\to (\mathcal{H},\mathcal{B}(\mathcal{H}))$ is said to be an $\mathcal{H}$--valued random variable. Here $\mathcal{B}(\mathcal{H})$ is the Borel $\sigma$-algebra of $\mathcal{H}$ with respect to  the norm topology.  The probability law of $X$ is a Borel measure $\mu_X$ in $\mathcal{H}$ defined by
\[
\mu_X(B)=\mathbb{P}(X^{-1}(B)), \quad B\in \mathcal{B}(\mathcal{H}). 
\]
The random variable $X$ has the expectation $ \expec X\in \mathcal{H}$, given by
\[
 (\expec X, \varphi)_{\mathcal{H}}= \expec(X,\varphi)_{\mathcal{H}},\quad \varphi\in \mathcal{H},
\]
and the covariance operator $C_X:\mathcal{H}\to \mathcal{H}$, defined by
\begin{equation}
\label{eq_cov_op_new}
(C_X\varphi,\psi)_{\mathcal{H}}= \expec((X- \expec X, \varphi)_{\mathcal{H}}(X- \expec X,\psi)_{\mathcal{H}}),\quad \varphi,\psi\in \mathcal{H}. 
\end{equation}

We say that $X$ is a Gaussian random variable if for $\varphi\in \mathcal{H}$, the real-valued random variable $\omega\mapsto (X(\omega),\varphi)_{\mathcal{H}}$ is Gaussian.  The probability law $\mu_X$
 of the Gaussian random variable $X$ is a Gaussian probability measure on $\mathcal{H}$ in the sense that $l_*\mu$ is a Gaussian probability measure on $\R$ for every linear continuous functional $l:\mathcal{H}\to \R$. 

Given a Gaussian random variable $X$ with values in $\mathcal{H}$, it is known that its covariance operator $C_X$ is a non-negative, self-adjoint trace class operator in $\mathcal{H}$. Conversely, every non-negative self-adjoint trace class operator is a covariance operator of  a Gaussian random variable $X$ with values in $\mathcal{H}$, see \cite[Theorem 2.3.1]{Bog1998}. 

We shall need the following result. 

\begin{prop}
\label{prop_density_new}
Let $X_j$, $j=1,2,\dots$, be independent identically distributed Gaussian $\mathcal{H}$-valued random variables with $\mathbb{E}X_j=0$ and covariance operators $C_{X_j}:\mathcal{H}\to \mathcal{H}$ being injective. Then the set $\{X_j(\omega):j=1,2,\dots\}$ is almost surely dense in $\mathcal{H}$, with respect to the norm topology. 
\end{prop}

\begin{proof}

Let $X$ be an $\mathcal{H}$-valued Gaussian random variable with $\mathbb{E}(X)=0$ and an injective covariance operator $C_X$. Let $\mu_X$ be a probability law of $X$.  Associated with the Gaussian measure $\mu_X$
is the Cameron-Martin space $\mathcal{H}_{\mu_X}\subset \mathcal{H}$, defined as the completion of $\textrm{Ran}(C_X)$ with respect to the norm 
\[
\|x\|_{\mu_X}^2:=(C_Xx^*,x^*)_{\mathcal{H}},\quad x=C_X x^*. 
\]
The Cameron-Martin space $\mathcal{H}_{\mu_X}$ can also be characterized
as $\mathcal{H}_{\mu_X}=C_X^{1/2}(\mathcal{H})$.
Since the operator $C_X=C_X^*$ is injective,  $\textrm{Ran}(C_X)$ is dense in $\mathcal{H}$ with respect to the norm topology, and therefore, so is  the Cameron-Martin space $\mathcal{H}_{\mu_X}$. 

On the other hand, according to  \cite[Theorem 3.6.1]{Bog1998}, see also \cite[Section 3]{Hairer}, the support of the Gaussian measure $\mu_X$ is the closure of $\mathcal{H}_{\mu_X}$ in $\mathcal{H}$ with respect to the norm topology.
We recall that the support of $\mu_X$ consists of those points $x\in \mathcal{H}$ such that $\mu_X(U)>0$ for any neighborhood $U$ of $x$. 

Thus, for any non-empty open set $W\subset \mathcal{H}$, we have
\[
\mathbb{P}(X\in W)>0. 
\]

It is now easy to finish the proof. Since the space $\mathcal{H}$ is separable, it has a countable basis with respect to the norm topology, which we denote by $\{W_k\}_{k=1}^\infty$. 
As $X_j$, $j=1,2,\dots$, are independent identically distributed, we get for any $k=1,2,\dots$, 
\[
\mathbb{P}(\cup_{j=1}^\infty (X_j\in W_k) )=1,
\quad\text{i.e.} \quad \mathbb{P}([\cup_{j=1}^\infty (X_j\in W_k) ]^c)=0.
\] 
Hence, we obtain that 
\[
0=\sum_{k=1}^\infty\mathbb{P}([\cup_{j=1}^\infty (X_j\in W_k) ]^c)
\ge \mathbb{P}(\cup_{k=1}^\infty[\cup_{j=1}^\infty (X_j\in W_k) ]^c),
\]
and therefore, 
\[
1=\mathbb{P}(\cap_{k=1}^\infty  \cup_{j=1}^\infty (X_j\in W_k) )=
\mathbb{P}(\{\forall k\in\N \  \existsÊj\in \N: X_j\in W_k\}).
\]
Thus, 
\[
\mathbb{P}( \{\textrm{for any non-empty open set }W,\   \existsÊj\in \N: X_j\in W\} )=1,
\]
which completes the proof. 

\end{proof}

Our purpose now is construct a sequence of Gaussian random variables, satisfying the assumptions  of Proposition \ref{prop_density_new} taking values in a suitable scale of  Hilbert spaces. 
To that end, let $A:C^\infty(M,V)\to C^\infty(M,V)$ be an elliptic formally self-adjoint positive second order partial differential operator, as in Theorem \ref{thm_main}.  
We define the operator
\[
P=A+P_0, \quad P_0=-\p_t^2+t^2,
\]
on $L^2(\R,L^2(M,V))$.   The harmonic oscillator   $P_0$, equipped with the domain 
\[
\mathcal{D}(P_0)=\{u\in L^2(\R):P_0u\in L^2(\R)\}=\{u\in L^2(\R):t^j\p_t^k u\in L^2(\R), j+k\le 2\},
\]
is self-adjoint on $L^2(\R)$ with 
$\textrm{spec}(P_0)=\{2j+1:j=0,1,2,\dots\}$.  The domain of $P_0^m$ is given by
\[
\mathcal{D}(P_0^m)=\{u\in L^2(\R):t^j\p_t^k u\in L^2(\R), j+k\le 2m\},
\]
and furthermore,
\[
\bigcap_{m=1}^\infty \mathcal{D}(P_0^m)=\mathcal{S}(\R),
\]
where $\mathcal{S}(\R)$ is the Schwartz space, 
see \cite{Hitrik_Pra_2009}.  The operator $A$, equipped with the domain 
\[
\mathcal{D}(A)=\{u\in L^2(M,V): A u\in L^2(M,V)\}=H^2(M,V),
\]
is self-adjoint positive on $L^2(M,V)$. Furthermore,
\[
\mathcal{D}(A^m)=H^{2m}(M,V),\quad \bigcap_{m=1}^\infty\mathcal{D}(A^m)=C^\infty(M,V). 
\]

The operator $P$, equipped with the domain $\mathcal{D}(P)=\{u\in L^2(\R, L^2(M, V)): Pu\in L^2(\R, L^2(M,V))\}$,  is self-adjoint with the discrete spectrum, given by 
\[
\textrm{spec}(P)=\textrm{spec}(A)+ \textrm{spec}(P_0).
\] 
Let $0<\lambda_1\le \lambda_2\le\dots$ be the eigenvalues of $P$, repeated according to their multiplicity, and $\varphi_j\in \mathcal{S}(\R_t)\otimes C^\infty(M,V)$ be the  corresponding orthonormal basis of eigenfunctions, $P\varphi_j=\lambda_j\varphi_j$, $j=1,2,\dots$.  

We consider next a rough upper bound for the counting function $N(E)$ of the eigenvalues of $P$, 
\[
N(E)=\sum_{\mu+\nu\le E}1.
\] 
Here $ \mu\in\textrm{spec}(A)$ and $\nu\in  \textrm{spec}(P_0)$. We have
\[
N(E)=\sum_{\nu\le E}\sum_{\mu\le E-\nu}1=\sum_{\nu\le E}N_A(E-\nu)\le c\sum_{\nu\le E}(E-\nu)^{n/2}\le cE^{n/2+1},
\]
where $N_A$ is the counting function for the eigenvalues of $A$, and we have used that $N_A(s)\le cs^{n/2}$, see \cite{Shubin_book}.
It follows that 
\begin{equation}
\label{eq_sec_10_1}
\lambda_j\ge j^{\frac{2}{n+2}}/c,\quad c>0,\quad j=1,2,\dots.
\end{equation}

Let $C_X=e^{-P}$. Then $C_X$ is non-negative self-adjoint operator on $L^2(\R,L^2(M,V))$ with the eigenvalues of the form $e^{-\lambda_j}$, $j=1,2,\dots$. In particular, we see that $C_X$ is injective, and \eqref{eq_sec_10_1} implies that $C_X$ is of trace class. 

We shall now introduce explicitly a Gaussian random variable $X$ for which $C_X$ is the covariance operator. When doing so, let $Y_j\in N(0,1)$, $j=1,2,\dots$, be a collection of independent identically distributed real-valued Gaussian random variables. We define
\begin{equation}
\label{eq_sec_10_1 bb}
X=\sum_{j=1}^\infty  e^{-\lambda_j/2}Y_j\varphi_j.
\end{equation}
As the operator $C_X$ is of trace class,  formula \eqref{eq_sec_10_1 bb} defines a $L^2(\R, L^2(M,V))$-valued random variable, see e.g.\ \cite[Thm.\ 2.3.1]{Bog1998}.

Furthermore, one easily sees that $\mathbb{E}X=0$ and the covariance operator of $X$ is exactly $C_X$. 

Next we shall show that $X\in  \bigcap_{m=1}^\infty\mathcal{D}(P^m)=\mathcal{S}(\R_t)\otimes C^\infty(M,V)$ almost surely.   Indeed, for  $m=1,2,\dots$, we have
\[
P^mX=\sum_{j=1}^\infty  e^{-\lambda_j/2}Y_j\lambda_j^m\varphi_j,
\]
and 
\[
\mathbb{E}(\|P^mX\|^2_{L^2(\R, L^2(M, V))})=\sum_{j=1}^\infty  e^{-\lambda_j}\mathbb{E}(|Y_j|^2)\lambda_j^{2m}=\sum_{j=1}^\infty  e^{-\lambda_j}\lambda_j^{2m}<\infty.
\]
Here we have used \eqref{eq_sec_10_1}.

Viewing the random variable $X$ as taking values in the Hilbert space $\mathcal{D}(P^m)$, using \eqref{eq_cov_op_new}, we may check that the corresponding covariance  operator is given by
\[
C_{X,\mathcal{D}(P^m)}=P^{-m}C_{P^mX}P^m:\mathcal{D}(P^m)\to \mathcal{D}(P^m),
\]
where $C_{P^mX}:L^2(\R, L^2(M,V))\to L^2(\R,L^2(M,V))$ is the covariance operator of the random variable $P^mX$, taking values in $L^2(\R, L^2(M,V))$. Since the operator $C_{P^mX}$ is injective, the operator $C_{X,\mathcal{D}(P^m)}$ is injective as well. 

We summarize the discussion above in the following proposition. 

\begin{prop}
\label{prop_gauss_new}
There exists a Gaussian random variable $X$ with values in $L^2(\R,L^2(M,V))$ such that 
\begin{itemize}
\item[(i)]  $X\in \mathcal{S}(\R_t)\otimes C^\infty(M,V)$ almost surely; 

\item[(ii)] $\mathbb{E}(X)=0$;

\item[(iii)] the operators $C_X:L^2(\R, L^2(M,V))\to L^2(\R, L^2(M,V))$ and $C_{X,\mathcal{D}(P^m)}:\mathcal{D}(P^m)\to \mathcal{D}(P^m)$, $m=1,2,\dots$, are injective.

\end{itemize}

\end{prop}

Combining Proposition \ref{prop_density_new} and  Proposition \ref{prop_gauss_new}, we obtain a sequence $X_j$, $j=1,2,\dots$, of  independent identically distributed Gaussian random variables taking values in  $\mathcal{S}(\R_t)\otimes C^\infty(M,V)$ such that the set $\{X_j(\omega): j=1,2,\dots\}$ is almost surely dense in $\mathcal{D}(P^m)$, with respect to the norm topology, for all $m=0, 1,2,\dots$. 

It follows that for any $U\in\mathcal{B}_M$ and $I\in \mathcal{B}_\R$, the set $\{X_j(\omega)|_{U\times I}: j=1,2,\dots\}\subset C^\infty(\overline{I}, C^\infty(\overline{U},V))$ is almost surely dense in $L^2(I, L^2(U,V))$.  

Finally, let $I\in \mathcal{B}_\R$. Using the set $\{X_j(\omega): j=1,2,\dots\}$, we shall construct a countable subset of $C^\infty_0(\overline{I}, C^\infty(M,V))$, which is almost surely dense in the latter space. 
To that end, notice that the set 
 $\{X_j(\omega):j=1,2,\dots\}$ is almost surely dense in  $\mathcal{S}(\R_t)\otimes C^\infty(M,V)$ with respect to the topology, generated by the seminorms 
 \[
 u\mapsto\|P^mu\|_{L^2(\R, L^2(M,V))},\quad m=0,1,2,\dots.
 \] 
 Let $\psi\in C^\infty_0(\overline{I})$ be such that $\psi\ge 0$ and $\psi(t)>0$ for all $t\in I$.  Then we have
 \begin{equation}
 \label{eq_fact}
 C_0^\infty(I)=\psi C^\infty_0(I),
 \end{equation}
 where $C_0^\infty(I)=\{\varphi\in C^\infty(\R):\supp(\varphi)\subset I\}$. Indeed, if $\varphi\in C^\infty_0(I)$, there is a smooth factorization,
 \[
 \varphi=\psi\frac{\varphi}{\psi},\quad \frac{\varphi}{\psi}\in C_0^\infty(I). 
 \]
 On the other hand, using that elements of the space $C^\infty_0(\overline{I})$ vanish to infinite order at the end points of $I$, we see that $C^\infty_0(I)$ is dense in $C_0^\infty(\overline{I})$,  with respect to the Fr\'echet topology, generated by the seminorms $\varphi\mapsto \sup_{t\in \overline{I}}|\varphi^{(j)}(t)|$, $j=0,1,2,\dots$.

 As $C^\infty_0(I, C^\infty(M,V))\subset \mathcal{S}(\R_t)\otimes C^\infty(M,V)$, we know that the set $\{X_j(\omega): j=1,2,\dots\}$ is almost surely dense in $C^\infty_0(I, C^\infty(M,V))$ with respect to the topology for $C^\infty_0(\overline{I}, C^\infty(M,V))$.
Now using \eqref{eq_fact}, we conclude  that the set  $\{\psi X_j(\omega): j=1,2,\dots\}\subset C^\infty_0(\overline{I}, C^\infty(M,V))$ is almost surely dense in $C^\infty_0(\overline{I}, C^\infty(M,V))$. 
 Thus, one can almost surely obtain a dense set in $C^\infty_0(\overline{I}, C^\infty(M,V))$ by taking
 a sequence of realizations of the random variables $\psi X_j(\omega)$, $ j=1,2,\dots$

Summarizing the discussion in this section, we see that the assumption in Theorem \ref{thm_main} concerning the existence of countable dense sets of sources is generic in the precise sense described  above.

\section{Acknowledgements}

We are very grateful to Herbert Koch for the very careful reading of the manuscript,  numerous comments and suggestions, leading to substantial improvements in the presentation.  We are also very grateful to Boris Tsirelson for helping us with the proof of Proposition \ref{prop_density_new}.  The research of K.K. is supported by the
Academy of Finland (project 125599) and the research of M.L. is supported by the Academy of Finland Center of Excellence programme 213476 and project 141104.

\end{document}